\theoremstyle{definition} \newtheorem{definition}{Definition}[section]
\theoremstyle{definition} 
\theoremstyle{remark}     \newtheorem{remark}[definition]{Remark}
\theoremstyle{plain}      \newtheorem{theorem}[definition]{Theorem}
\theoremstyle{plain}      \newtheorem{proposition}[definition]{Proposition}
\theoremstyle{plain}      
\theoremstyle{plain}      \newtheorem{lemma}[definition]{Lemma}
\theoremstyle{definition} 
\theoremstyle{definition} \newtheorem{question}[definition]{Question}
\theoremstyle{plain}      \newtheorem{fact}{Fact}
  \newcommand{\norme}[1]
    {\left\| #1 \right\|}
  \newcommand{\abs}[1]
    {\left\vert #1 \right\vert}
  \newcommand{\ps}[2]
    {\left\langle #1,#2 \right\rangle}
  \newcommand{\ind}[1]
    {\mathbbm{1}_{#1}}
  \newcommand{\conj}[1]
    {\overline{#1}}
  \newcommand{\Range}
    {\mathrm{Ran}}
  \newcommand{\adherence}[1]{\overline{#1}}
  \newcommand{\Interieur}[1]{int(#1)}    
  \newcommand{\Z}{\mathbb{Z}}
  \newcommand{\B}[1]{\mathcal{B}(#1)}
  \newcommand{\N}{\mathbb{N}}
  \newcommand{\C}{\mathbb{C}}  
  \newcommand{\lun}{l^1}
  \newcommand{\lp}[1]{l^{#1}}  
  \newcommand{\Spec}[1]{\sigma(#1)} 
  \newcommand{\Specp}[1]{\sigma_p(#1)}
  \newcommand{\Spece}[1]{\sigma_e(#1)}
  \newcommand{\Span}[1]{\overline{span}\lbrace #1 \rbrace}
  \newcommand{\Reel}[1]{\mathsf{Re}(#1)}
  \newcommand{\Ima}[1]{\mathsf{Im}(#1)}
  \newcommand{\tens}[2]{#1 \otimes #2}
  \newcommand{\m}{m}
  \newcommand{\dist}{\textrm{dist}}  
  \newcommand{\diam}{\textrm{diam}}  
  \newcommand{\dens}{\textrm{dens}}
  \newcommand{\card}{\sharp}
\begin{document}

\title{Rank one perturbations of diagonal operators without eigenvalues}
\author{Hubert \sc{Klaja} 
       \thanks{D\'epartement de math\'ematiques et de statistique,
               Pavillon Alexandre-Vachon,
               1045, av. de la M\'edecine,
               Universit\'e Laval,
               Qu\'ebec (Qu\'ebec),
               Canada G1V 0A6 ; 
               \tt{hubert.klaja@gmail.com} 
               }
       }   
\date{ }

\maketitle

\begin{abstract} 
  In this paper, we prove that every diagonal operator on a Hilbert space of which is of multiplicity one and has perfect spectrum admits a rank one perturbation without eigenvalues. This answers a question of Ionascu. \\
  \textbf{Keywords}:  rank one perturbations of diagonal operators on Hilbert spaces, spectrum, eigenvalues. \\
  \textbf{MSC 2010} :  47A10, 47B06, 47A75, 47B15. \\
\end{abstract}

\section{Introduction}

  Let $H$ be an infinite dimensional separable complex Hilbert space, and let $(e_i)_{i \in \N}$ be a Hilbertian basis of $H$. If $u,v \in H$, we denote by $\tens{u}{v}$ the rank one operator defined for every $h \in H$ by 
$$
  \tens{u}{v}(h) = \ps{h}{v}u.
$$
  Recall that for every rank one operator $R \in \B{H}$, there exist $u,v \in H$ (not unique) such that $R = \tens{u}{v} $.
  We say that an operator $D$ on $H$ is  diagonal in the basis $(e_i)_{i \in \N}$ if there exists a sequence of complex numbers $(\lambda_i)_{i \in \N} $ such that
  $$
    D = \sum_{i \in \N} \lambda_i \tens{e_i}{e_i}.
  $$

  The class of operators which are rank one perturbations of a diagonal operator is still not well understood.
   For example, the invariant subspace problem as well as the hyperinvariant subspace problem, are still open for such operators (see \cite{Foias_Jung_Ko_Pearcy_2007}, \cite{Fang_Xia_2012} and \cite{Klaja_2013} for some partials results concerning the existence of hyperinvariant subspace for perturbations of diagonals operators). The most obvious approach for these operators is to look for an eigenvalue. But this is not always possible. Indeed Stampfli \cite{Stampfli_2001} built a diagonal operator $D$ and two vectors $u,v \in H$ such that $\Specp{D + \tens{u}{v}} = \emptyset$. We don't know if this operator has a non trivial invariant (nor hyperinvariant) subspace.
On the other hand, the opposite phenomenon can happen. Indeed it is proven in \cite{Sophie_2012} that there exists a rank one perturbation of a unitary diagonal operator which has uncountably many eigenvalues (see also \cite{Baranov_Lishanskii_2014} for an alternate proof).

  In \cite{Ionascu_2001}, Ionascu studied rank one perturbation of diagonal operators and asked the following question

\begin{question}[\cite{Ionascu_2001}]
  \label{QuestionIonascu}
  Let $D$ be a diagonal operator. Does there exist $u,v \in H$ such that 
  \begin{enumerate}
    \item $\Spec{D + \tens{u}{v}} = \Spec{D}$, 
    \item $\Specp{D + \tens{u}{v}} = \emptyset$ ?
  \end{enumerate}
\end{question}

  The goal of this article is to answer this question. Ionascu proved in \cite{Ionascu_2001} that if a bounded diagonal operator $D$ has no cyclic vector, or if the spectrum of $D$ has an isolated point, the answer is no.
The main result of this paper is the following result, which is a positive answer to Question \ref{QuestionIonascu} in all the other cases. 
 
\begin{theorem}
  \label{ThPertRangUnOpDiag}
  Let  $D = \sum_{i \in \N} \lambda_i \tens{e_i}{e_i} $ be a bounded diagonal operator on $H$. Suppose that $D$ has a cyclic vector and that $\Spec{D} $ is a perfect compact set (i.e. that $\Spec{D} $ has no isolated points). Then there exist $u,v \in H$ such that
  \begin{enumerate}
    \item $\Spec{D + \tens{u}{v}} = \Spec{D}$, 
    \item $\Specp{D + \tens{u}{v}} = \emptyset$.  
  \end{enumerate}
  Moreover $u,v \in H$ can be chosen so that $\norme{\tens{u}{v}}$ is arbitrarily small.
\end{theorem}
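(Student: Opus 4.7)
The plan is to reduce the problem to analytic conditions on the scalar function $g(\mu):=\sum_i u_i\overline{v_i}/(\mu-\lambda_i)$ and then construct $u$ and $v$ by first fixing magnitudes and then phases. Writing $u=\sum_i u_ie_i$, $v=\sum_i v_ie_i$ and $w_i:=u_i\overline{v_i}$, the factorisation $D+\tens{u}{v}-\mu=(D-\mu)(I+\tens{(D-\mu)^{-1}u}{v})$ for $\mu\notin\Spec{D}$, combined with the standard invertibility criterion for $I+\tens{x}{v}$, gives
\[ \Spec{D+\tens{u}{v}}=\Spec{D}\cup\{\mu\in\C\setminus\Spec{D}:g(\mu)=1\}. \]
Coordinate-wise inspection of $(D+\tens{u}{v})h=\mu h$ moreover shows that (i) if $u_{i_0},v_{i_0}\neq 0$ then $\lambda_{i_0}$ is not an eigenvalue of $D+\tens{u}{v}$, and (ii) $\mu\notin\{\lambda_i\}$ is an eigenvalue if and only if $g(\mu)=1$ and $\sum_i|u_i|^2/|\lambda_i-\mu|^2<\infty$ (the second condition ensuring the formal eigenvector lies in $H$). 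Since $\tens{u}{v}$ is compact and $K:=\Spec{D}$ is perfect, Weyl's theorem forces $\Spece{D+\tens{u}{v}}=K$, so $K\subseteq\Spec{D+\tens{u}{v}}$ holds automatically. Thus it suffices to find $u,v\in H$ with $\|u\|\,\|v\|$ arbitrarily small satisfying: (C1) $u_i,v_i\neq 0$ for every $i$; (C2) $g(\mu)\neq 1$ for every $\mu\in\C\setminus K$; and (C3) $\sum_i|u_i|^2/|\lambda_i-\mu|^2=+\infty$ for every $\mu\in K\setminus\{\lambda_i\}$.

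Condition (C3) is where the perfectness of $K$ enters. Since every $\mu\in K$ is an accumulation point of the dense sequence $(\lambda_i)$, the $\ell^2$-masses $|u_i|^2$ can be distributed hierarchically to force divergence at every point of $K$. Concretely, cover $K$ by nested cells $\{Q_{n,k}\}$ of diameter $\sim 2^{-n}$, pick an index $i(n,k)$ with $\lambda_{i(n,k)}\in Q_{n,k}$, and set $|u_{i(n,k)}|^2\sim 2^{-ns}$ for some $s\in(0,2)$. For any $\mu\in K$, each level $n$ contributes $\gtrsim 2^{-ns}/2^{-2n}=2^{n(2-s)}$ to the potential, yielding divergence. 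The remaining coordinates of $u$ are set to tiny nonzero values (ensuring (C1) on the $u$-side), and the $|v_i|$ are taken to be tiny and nonzero so that $\|v\|$, and hence $\|\tens{u}{v}\|=\|u\|\,\|v\|$, is as small as desired.

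The main obstacle is (C2). The function $g$ is holomorphic on $\C\setminus K$ with $g(\infty)=0$, but $|g|$ is unbounded near $K$; we need $1\notin g(\C\setminus K)$. With the magnitudes of $u_i, v_i$ fixed, only the phases $\theta_i:=\arg w_i$ remain free. I would attack (C2) by a Baire-category argument in the compact space $\T^{\N}$ of phase configurations: for each compact $L\Subset\C\setminus K$ the set $B_L$ of phases for which $g-1$ vanishes somewhere in $L$ is closed, and is nowhere dense because varying just two phases moves $g(\mu)$ freely within a two-dimensional region of $\C$, making $g(\mu)=1$ a codimension-two constraint. Exhausting $\C\setminus K$ by compact sets $L_n$, the admissible phase configurations form a residual set. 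The most delicate step is handling zeros of $g-1$ that accumulate to the boundary $K$: here one exploits the smallness of $\|w\|_1$ together with the asymptotic $g(\mu)\approx w_i/(\mu-\lambda_i)$ near each $\lambda_i$ to locate the nearest zero of $g-1$ approximately at $\lambda_i+w_i/(1-h_i(\lambda_i))$ (with $h_i:=g-w_i/(\,\cdot\,-\lambda_i)$), and to choose the phase of $w_i$ so that this point lies inside $K$, where (C3) prevents it from being an eigenvalue, rather than in the resolvent set $\C\setminus K$.
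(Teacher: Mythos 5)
Your reduction to the three conditions (C1)--(C3) via the Ionascu-type eigenvalue criterion and Weyl's theorem is the same as the paper's, and is fine. The two construction steps, however, both have genuine gaps. For (C3), your hierarchical assignment $\abs{u_{i(n,k)}}^2 \sim 2^{-ns}$ with $s\in(0,2)$, one index per dyadic cell of side $2^{-n}$ meeting $K$, is incompatible with $u\in H$ as soon as $K=\Spec{D}$ has positive planar Lebesgue measure (e.g.\ $K=[0,1]^2$ or $\overline{\D}$, which the theorem must cover): there are $\gtrsim \m(K)\,2^{2n}$ such cells at level $n$, so $\norme{u}^2 \gtrsim \sum_n \m(K)\,2^{n(2-s)} = \infty$. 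If instead you normalize per level so that $u\in\ell^2$ (as the paper does with weights $1/(n\sqrt{\beta_n})$, $\beta_n$ the number of selected cells), the nearest-cell contribution per level drops to $O(1/n^2)$ and your divergence estimate evaporates; recovering divergence then requires summing over the $\sim p$ cells at distance $\sim p2^{-n}$ from $z$ and knowing that a definite fraction of them actually contain selected eigenvalues, which the paper gets from the Lebesgue density theorem at density points of $\Spec{D}$, together with a separate construction (a Rogers-type covering of the null set of non-density points by balls with $\sum\diam(O_i)^2<\infty$, each point covered infinitely often) for the exceptional set. This measure-theoretic machinery is not optional; your sketch has no substitute for it.

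For (C2) the situation is worse: the Baire-category argument over phase configurations cannot work as stated, because the bad sets $B_L$ are not nowhere dense. If $g_\theta-1$ has a zero at an interior point of $L$, then by Rouch\'e that zero persists under every sufficiently small perturbation of $g$; choosing a basic neighborhood of $\theta$ in $\T^{\N}$ that constrains enough coordinates makes the uncontrolled tail contribution $2\sum_{i>N}\abs{w_i}/\dist(L,K)$ smaller than the Rouch\'e margin, so $B_L$ contains a nonempty open set. The ``codimension-two'' heuristic does not address this stability of zeros, and the closing remark about steering boundary zeros into $K$ does not repair it. Note also that you fix the moduli $\abs{u_i},\abs{v_i}$ in advance and vary only phases, and it is not clear that (C2) is achievable at all under that constraint. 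The paper avoids the issue entirely by a global, non-perturbative construction (due to Alexandre): the coefficients $c_i=u_i\overline{v_i}$ are built, modulus and argument together with only an upper bound $\abs{c_i}\le\gamma_i$, so that $\sum_i c_i/(z-\lambda_i)-1$ equals an infinite product $\prod_i (z-\mu_i)/(z-\lambda_i)$ converging locally uniformly on $\C\setminus K$, whose zeros $\mu_i$ are all placed inside the perfect set $K$; hence the function never vanishes off $K$, and one then sets $\ps{v}{e_i}=\overline{c_i}/\overline{\ps{u}{e_i}}$. You would need either this kind of explicit construction or a new idea to close (C2).
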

  
  Moreover it will be clear from the strategy of the proof that the results of \cite{Foias_Jung_Ko_Pearcy_2007} (and even those of \cite{Fang_Xia_2012} and \cite{Klaja_2013}) about the existence of a non trivial hyperinvariant subspace won't apply to the operators build in this theorem. Therefore for some of those operators, we won't know if they posses a non trivial hyperinvariant subspace.    
  
   The paper is organized  as follows, in section 2 we will recall some known results about rank one perturbations of diagonal operators that will help us to define a strategy to answer Question \ref{QuestionIonascu}. In section 3 we will recall the basic results needed to prove the main result.  In section 4 we build a vector that is not in the range of $D-z$ for any $z \in \Spec{D}$. More precisely we will prove the following proposition.
\begin{proposition}
  \label{PropUPasDansImDMoinsZIntro}
  Let $(\lambda_i)_{i \in \N}$ be a sequence of complex number dense in a closed compact perfect set $K$ and such that for every $i \ne j $, $\lambda_i \ne \lambda_j $.
  Then there exists a sequence $(u_i)_{i \in \N}$ of complex numbers such that 
  \begin{enumerate}
    \item for all $i \in \N$, $u_i \ne 0 $,
    \item for all $z \in K \setminus \lbrace \lambda_i, i \in \N \rbrace$,
    $$
    \sum_{i \in \N} \frac{\abs{u_i}^2}{\abs{z-\lambda_i}^2} = \infty.
    $$
  \end{enumerate}
\end{proposition}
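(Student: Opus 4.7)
The plan is to realize $(u_i)$ as an atomic approximation to a continuous probability measure $\mu$ on $K$ whose Newtonian-type potential $z \mapsto \int_K d\mu(w)/|z-w|^2$ is identically $+\infty$ on $K$; discretizing $\mu$ at finer and finer scales along the sequence $(\lambda_i)$ will produce a square-summable $(u_i)$ (a condition implicit in the application, since the theorem builds a Hilbert vector $u \in H$) satisfying the required divergence at every $z \in K \setminus \{\lambda_i\}$.

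First I would use that $K$ is perfect and compact to exhibit a continuous Borel probability measure $\mu$ on $K$ for which the truncated potentials
$$
I_n(z) \;:=\; \int_{K \cap \{|z-w| \ge 2^{-n}\}} \frac{d\mu(w)}{|z-w|^2}
$$
satisfy $\sum_{n \ge 1} I_n(z)/n^2 = +\infty$ for every $z \in K$. When $K$ is $d$-Ahlfors regular, the $d$-dimensional Hausdorff measure is such a $\mu$, with $I_n(z) \asymp 2^{n(2-d)}$ if $d < 2$ and $I_n(z) \asymp n$ if $d = 2$; both estimates are more than sufficient.

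Next I would atomize $\mu$ along $(\lambda_i)$. Let $\mathcal{D}_n$ denote the set of axis-parallel dyadic squares of side $2^{-n}$ meeting $K$. Density of $(\lambda_i)$ lets me choose an injective map $\psi : \bigcup_n \mathcal{D}_n \to \N$ with $\lambda_{\psi(Q)} \in Q$, and I would set $|u_i|^2 = \mu(Q)/n^2$ whenever $i = \psi(Q)$ with $Q \in \mathcal{D}_n$, and $|u_i|^2 = 2^{-i}$ otherwise. Then each $u_i$ is nonzero and $\sum_i |u_i|^2 \le \mu(K) \sum_n 1/n^2 + 1 < \infty$. For a fixed $z \in K \setminus \{\lambda_i\}$, I would then bound
$$
\sum_i \frac{|u_i|^2}{|z-\lambda_i|^2} \;\ge\; \sum_n \frac{1}{n^2} \sum_{Q \in \mathcal{D}_n} \frac{\mu(Q)}{|z-\lambda_{\psi(Q)}|^2}
$$
from below by a Riemann-sum comparison: for $Q \in \mathcal{D}_n$ with $\dist(z,Q) \ge 2^{-n}$ one has $|z-\lambda_{\psi(Q)}| \le (1+\sqrt 2)|z-w|$ for every $w \in Q$, hence $\mu(Q)/|z-\lambda_{\psi(Q)}|^2 \ge (1+\sqrt 2)^{-2} \int_Q d\mu(w)/|z-w|^2$; summing over such $Q$ at scale $n$ and then over $n$ gives $\sum_i |u_i|^2/|z-\lambda_i|^2 \gtrsim \sum_n I_n(z)/n^2 = +\infty$.

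The hard part will be the first step, namely producing $\mu$ with the quantitative divergence of $I_n(z)$ uniformly in $z \in K$; for Ahlfors-regular $K$ this is routine, but for more irregular perfect compacta it may require a Frostman-type construction carefully adapted to the local geometry of $K$, or else a purely combinatorial construction using nested dense $2^{-n}$-nets drawn from $(\lambda_i)$, bypassing the measure entirely. In any case, once a working $(u_i)$ is produced, rescaling $(u_i) \mapsto (\varepsilon u_i)$ yields the arbitrarily small $\ell^2$-norm needed in Theorem~\ref{ThPertRangUnOpDiag}.
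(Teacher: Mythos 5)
There is a genuine gap, and it sits exactly where you flag it yourself: the existence of the measure $\mu$. For a \emph{general} perfect compact $K$ no construction is given of a probability measure with the quantitative divergence $\sum_n I_n(z)/n^2=\infty$ for every $z\in K\setminus\{\lambda_i\}$, and this is essentially the whole difficulty of the proposition. Ahlfors regularity is a very strong extra hypothesis: a general perfect compactum can have positive area in one part, dimension-zero tendrils in another, and an uncountable measure-zero set of points at which $K$ is "thin at all scales" (e.g.\ points where the area of $K\cap B(z,r)$ decays like $r^{10}$); at such points the restriction of any reasonable Hausdorff-type measure gives a \emph{finite} $|z-w|^{-2}$-potential, and no Frostman-type measure with the required lower mass bounds need exist on that exceptional set. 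Your proposed fallback, "a purely combinatorial construction using nested nets drawn from $(\lambda_i)$, bypassing the measure entirely," is close to circular: putting the mass on the points $\lambda_i$ themselves and asking that the potential diverge off $\{\lambda_i\}$ \emph{is} the statement to be proved. Note also a second, quantitative point: because of the $1/n^2$ damping you introduce for square-summability, it is not enough that the potential of $\mu$ be infinite on $K$ (that only means $I_n(z)\to\infty$); you need growth like $I_n(z)\gtrsim n$, e.g.\ $I_n(z)\sim\log n$ gives an infinite potential but $\sum_n I_n(z)/n^2<\infty$, so your atomized sum could converge even with a "good" $\mu$ in hand.

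The paper's proof resolves precisely these two issues by a dichotomy rather than a single measure. It splits $\Spec{D}$ (your $K$) into $A_1$, the set of Lebesgue density points off the dyadic grid, and the Lebesgue-null remainder $A_2$. On $A_1$ the argument is the combinatorial counterpart of your Riemann-sum step for area measure: in each dyadic square meeting $A_1$ one $\lambda_{i(n,k)}$ is chosen with weight $1/(n\sqrt{\beta_n})$, and the density-point property (at least $3/4$ of the subsquares of $L_{n,p}(z)$ meet $A_1$) yields $\sum_{k}|z-\lambda_{i(n,k)}|^{-2}\gtrsim 2^{2n}\,n$, i.e.\ exactly the $I_n(z)\gtrsim n$ growth needed against the $1/n^2$ damping. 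On $A_2$ no per-scale estimate is attempted at all: Theorem \ref{ThEnsMesNulleRogers} covers $A_2$ by balls $O_i$ with $\sum_i \diam(O_i)^2<\infty$ such that every $z\in A_2$ lies in infinitely many $O_i$; choosing one $\lambda_{j(i)}\in O_i$ with coefficient $\diam(O_i)$ makes infinitely many terms of the series at $z$ of size $\ge 1$. Your scheme reproduces (in measure-theoretic language) the $A_1$ half only; to complete it you would need an analogue of this covering argument for the thin part, at which point you would have reconstructed the proof of Proposition \ref{PropUPasDansImDMoinsZ}.
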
   
    In section 5 we will reproduce a proof due to William Alexandre \cite{William_Alexandre} (who kindly allowed the author to reproduce the proof here), for building an analytic function which does not vanish on a prescribed set.
\begin{proposition}
  \label{PropFctionHoloQuiSannulePas}
  Let $F \subset \C$ be a perfect closed set. Let $(\lambda_i)_{i \in \N}$ a dense sequence in $F$. Let $(\gamma_i)_{i \in \N}$ be a sequence of strictly positive numbers such that
  $$
    \sum_{i \in \N} \gamma_i < \infty.
  $$
  Then there exists a sequence of complex numbers $(c_i)_{i \in \N}$ such that
\begin{enumerate}
  \item for all $i \in \N$, $c_i \ne 0$,
  \item for all $i \in \N$, $\abs{c_i} \le \gamma_i $,
  \item $ \sum_{i=1}^\infty \frac{c_i}{z-\lambda_i}$ converges uniformly on every compact subset of $\C \setminus F $,
  \item for all $z \in \C \setminus F $, $ \sum_{i=1}^\infty \frac{c_i}{z-\lambda_i}- 1 \ne 0$. 
\end{enumerate}
\end{proposition}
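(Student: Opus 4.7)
The plan is to build $(c_n)$ by induction, controlling the zeros of the partial sums $f_n(z) := \sum_{i=1}^n c_i/(z-\lambda_i)$, and then to conclude via Hurwitz's theorem. Since we only need $|c_n| \le \gamma_n$, at any step I am free to impose additional smallness on $|c_n|$; moreover, for any compact $K \subset \C \setminus F$ we have $|1/(z-\lambda_n)| \le 1/\dist(K,F)$, so condition $(3)$ and the holomorphy of the limit $f$ on $\C \setminus F$ will follow from $\sum |c_n|<\infty$.

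Clearing denominators, $f_n-1$ is a rational function whose numerator is a polynomial of degree exactly $n$ with leading coefficient $-1$, so it has exactly $n$ zeros in $\C$, none of which is a $\lambda_i$. These zeros solve $(f_{n-1}(z)-1)(z-\lambda_n) = -c_n$. For $|c_n|$ small, $n-1$ of them are small perturbations of the zeros of $f_{n-1}-1$, and one new zero bifurcates from $\lambda_n \in F$. The implicit function theorem yields a first-order displacement formula, linear and nondegenerate in $c_n$ (so long as the zeros remain simple, which is preserved by small generic perturbations), so the phase of $c_n$ can steer any specified zero in any prescribed direction. Since $F$ is perfect and $\{\lambda_j\}_{j \in \N}$ is countable, I fix once and for all target points $\tilde{\lambda}_i \in F \setminus \{\lambda_j : j \in \N\}$ with $|\tilde{\lambda}_i - \lambda_i| \le 2^{-i}$. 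The inductive hypothesis is that the $n$ zeros of $f_n-1$ can be labelled $z_1^{(n)}, \ldots, z_n^{(n)}$ so that for each fixed $i$ the sequence $(z_i^{(n)})_{n\ge i}$ is Cauchy with limit $\tilde{\lambda}_i$. At step $n$ the new term $c_n/(z-\lambda_n)$ automatically produces a zero near $\lambda_n \in F$; the phase of $c_n$ is then used to nudge one chosen older trajectory (cycling through the indices so that each is visited infinitely often) toward its target, while the modulus $|c_n|$ is taken small enough to respect $|c_n|\le \gamma_n$, to keep each unselected trajectory within its allotted error budget, and to guarantee summable drift.

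In the limit, $f_n-1 \to f-1$ uniformly on compact subsets of $\C \setminus F$, and $f-1$ is not identically zero on any connected component of $\C \setminus F$ (on the unbounded component because $f(z)\to 0$ at infinity, and on each bounded component this can be arranged during the construction by insisting that $f \ne 1$ at one distinguished point). Assume towards contradiction that $f(z_0)=1$ for some $z_0 \in \C \setminus F$. Hurwitz's theorem then provides zeros $z_{i_n}^{(n)}$ of $f_n-1$ tending to $z_0$. Passing to a subsequence, either $(i_n)$ is constant equal to some $i$, in which case $z_0 = \tilde{\lambda}_i \in F$, or $i_n \to \infty$, in which case $z_{i_n}^{(n)}$ lies within vanishing distance of $\lambda_{i_n}$ and therefore $z_0 \in \overline{\{\lambda_j : j \in \N\}} = F$. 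Either case contradicts $z_0 \in \C \setminus F$.

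The main obstacle is the book-keeping in the inductive step: coordinating the phase and modulus of each $c_n$ so that, across infinitely many steps, every trajectory converges to its target in $F$, while the smallness constraints $|c_n|\le\gamma_n$ and the cumulative drift bounds on the other trajectories are respected simultaneously. Feasibility rests on the non-degeneracy and linearity of the first-order displacement formula, on the freedom to rotate $c_n$ to any phase, and on the freedom to take $|c_n|$ as small as needed at each step.
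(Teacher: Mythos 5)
Your route---steering the zeros of the partial sums $f_n-1$ through the phase and modulus of each $c_n$, then concluding with Hurwitz---is genuinely different from the paper's proof, which prescribes the zeros outright: there $f$ is built as the infinite product $\prod_i (z-\mu_i)/(z-\lambda_i)$ with $\mu_i\in F$ chosen close to $\lambda_i$, so every approximant has all of its zeros in $F$ by construction, and the only work is to show that the partial-fraction coefficients $c_{i,N}$ converge to nonzero limits with $\abs{c_i}\le\gamma_i$ and that $f_N\to f$ locally uniformly off $F$. In your version, however, the core of the argument is asserted rather than proved, and it is exactly where the difficulty sits. The new zero created at step $n$ lies at $\lambda_n+O(\abs{c_n})$, generically \emph{off} $F$ (think of $F$ a segment or a Cantor set), so every trajectory must be actively transported a strictly positive distance so that its limit lands exactly in $F$; if some trajectory limit $w$ lies in $\C\setminus F$, then $f(w)=1$ and the construction fails. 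The first-order displacement of the selected zero at a visit is roughly $-c_n/\bigl(f_{n-1}'(z)(z-\lambda_n)\bigr)$: the phase of $c_n$ indeed controls its direction, but you give no lower bound on the achievable magnitude. The modulus is capped by $\gamma_n$, which may decay arbitrarily fast (e.g. $\gamma_n=2^{-2^n}$), and is divided by $\abs{f_{n-1}'(z)}\,\abs{z-\lambda_n}$, quantities you do not control: the zeros live next to $F$, where the poles $\lambda_j$ accumulate, and nothing prevents $\abs{f_{n-1}'}$ from blowing up along a trajectory. Without an estimate showing that the total steering available to a given trajectory over its infinitely many visits dominates the distance it must travel (plus the drift inflicted on it while other trajectories are being steered, plus the validity of the linearization when the displacement is comparable to $\abs{z-\lambda_n}$ or when the new zero near $\lambda_n$ nearly collides with an old one), the induction does not close. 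Note also that your ``inductive hypothesis'' that each $(z_i^{(n)})_{n\ge i}$ is Cauchy with limit $\tilde\lambda_i$ is a statement about the infinite future; what is needed, and missing, is a finite-stage quantitative invariant (uniform bounds $\rho_i\to 0$ on the total excursion of trajectory $i$, which your final ``$i_n\to\infty$'' case also silently uses) that forces it.

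The Hurwitz endgame has a second unaddressed piece. To derive a contradiction from $f(z_0)=1$ you need $f-1\not\equiv 0$ on the component of $\C\setminus F$ containing $z_0$. The unbounded component is fine since $f\to 0$ at infinity, but $\C\setminus F$ may have infinitely many bounded components, and a Cauchy transform $\sum_i c_i/(z-\lambda_i)$ of a discrete measure can in principle be constant on such a component; your parenthetical fix (insisting $f\ne 1$ at a distinguished point of each) introduces countably many open constraints that must survive the whole construction and the passage to the limit with uniform margins---more bookkeeping of the same unproven kind. The paper's product construction disposes of both issues at once (the zeros are in $F$ by fiat and $f$ is a convergent product of nonvanishing factors), which is precisely why it is set up that way; as it stands, your proposal is an interesting alternative strategy but not yet a proof.
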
         
     In section 6 we give a proof of the main result and in section 7 we discuss a generalization of the main result to unbounded diagonal operators.

\section{Some results about rank one perturbation of diagonal operators}

In this section, we recall some results of Ionascu \cite{Ionascu_2001} concerning rank one perturbations of diagonal operators.

\begin{proposition}[\cite{Ionascu_2001}] 
  Let $D = \sum_{i \in \N} \lambda_i \tens{e_i}{e_i} $ be a diagonal operator. If there exists $i,j \in \N$ such that $i \ne j$ and $\lambda_i = \lambda_j $, then for all $u, v \in H $ we have that 
  $
  \lambda_i \in \Specp{D + \tens{u}{v}}. 
  $
\end{proposition}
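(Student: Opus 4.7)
The plan is to exploit the fact that when $\lambda_i=\lambda_j$ with $i\ne j$, the eigenspace of $D$ associated with $\lambda_i$ contains the two-dimensional subspace $E:=\mathrm{span}\{e_i,e_j\}$. A rank one perturbation can destroy at most a one-dimensional slice of any eigenspace, so a one-dimensional piece of $E$ must survive as part of $\ker(D+\tens{u}{v}-\lambda_i I)$.

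More concretely, I would proceed as follows. First, for any $h\in E$, since $Dh=\lambda_i h$, one has the clean identity
$$
(D+\tens{u}{v}-\lambda_i I)h \;=\; \tens{u}{v}(h) \;=\; \ps{h}{v}\,u.
$$
If $u=0$, then any nonzero $h\in E$ is an eigenvector with eigenvalue $\lambda_i$, so assume $u\ne0$. The task then reduces to finding $h\in E\setminus\{0\}$ with $\ps{h}{v}=0$. Writing $h=ae_i+be_j$, the condition $\ps{h}{v}=0$ becomes the single scalar equation $a\,\conj{\ps{v}{e_i}}+b\,\conj{\ps{v}{e_j}}=0$, which is one linear constraint on the two unknowns $(a,b)\in\C^2$. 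It therefore admits a nonzero solution; for instance $(a,b)=(\ps{v}{e_j},-\ps{v}{e_i})$ works whenever these two coefficients are not both zero, and if they both vanish then $v\perp E$ so any $h\in E$ satisfies $\ps{h}{v}=0$.

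In either case we have produced a nonzero $h\in E$ with $(D+\tens{u}{v})h=\lambda_i h$, which proves $\lambda_i\in\Specp{D+\tens{u}{v}}$. There is essentially no obstacle: the argument is purely a codimension count, namely that the hyperplane $\ker(\tens{u}{v})=v^\perp$ (when $u\ne0$) must meet the two-dimensional space $E$ in dimension at least one. The only minor care needed is to distinguish the trivial cases $u=0$ or $v\perp E$, which is handled above.
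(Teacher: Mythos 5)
Your argument is correct and is the standard one; the paper itself states this proposition as a citation of Ionascu and gives no proof, and your codimension count (the hyperplane $v^\perp$ must meet the two-dimensional eigenspace $\mathrm{span}\{e_i,e_j\}$ nontrivially, and the $u=0$ case is trivial) is exactly the expected argument. One small slip: with $h=ae_i+be_j$ the condition is $a\,\conj{\ps{v}{e_i}}+b\,\conj{\ps{v}{e_j}}=0$, so your explicit witness should be $(a,b)=\bigl(\conj{\ps{v}{e_j}},-\conj{\ps{v}{e_i}}\bigr)$ rather than $(\ps{v}{e_j},-\ps{v}{e_i})$, which in general gives $2\I\,\Ima{\ps{v}{e_j}\conj{\ps{v}{e_i}}}\ne 0$; this does not affect the validity of the dimension-counting argument, which already guarantees a nonzero solution.
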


  Recall that only diagonal operators of spectral multiplicity one possess cyclic vectors. So we can reformulate the previous proposition the following way:
if $D$ has no cyclic vector, the answer to Question \ref{QuestionIonascu} is no. Here is another result of Ionascu.

\begin{theorem}[Ionascu \cite{Ionascu_2001}] 
  Let $D = \sum_{i \in \N} \lambda_i \tens{e_i}{e_i} $ be a diagonal operator. Let $i \in \N$. If for all $j \ne i$, $\lambda_j \ne \lambda_i $, and if $\lambda_i $ is an isolated point of $\Spec{D}$, then for all $u,v \in H $, we have either
  \begin{enumerate}
    \item $\lambda_i \in \Specp{D + \tens{u}{v}} $,
    \item $\lambda_i \notin \Spec{D + \tens{u}{v}} $.
  \end{enumerate}
\end{theorem}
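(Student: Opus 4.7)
My plan is to split into cases according to whether the coordinate product $u_i \bar{v_i}$ vanishes, where $u_i := \ps{u}{e_i}$ and $v_i := \ps{v}{e_i}$. Writing $T = D + \tens{u}{v}$, the eigenvalue equation $Th = \lambda_i h$ rewrites as $(D - \lambda_i)h = -\ps{h}{v} u$, and projecting onto $e_i$ yields the scalar constraint $u_i \ps{h}{v} = 0$. The isolation of $\lambda_i$ in $\Spec{D}$ also makes $\delta := \dist(\lambda_i, \lbrace \lambda_j : j \ne i \rbrace)$ strictly positive, which will control the convergence of the auxiliary series appearing below.

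If $v_i = 0$, then $\tens{u}{v}(e_i) = \bar{v_i} u = 0$, so $T e_i = \lambda_i e_i$ and conclusion (1) holds trivially. If $u_i = 0$ but $v_i \ne 0$, I would solve the component equations $(\lambda_j - \lambda_i) h_j = - \ps{h}{v} u_j$ by normalising $\ps{h}{v} = -1$ and setting $h_j = u_j/(\lambda_j - \lambda_i)$ for $j \ne i$; the series converges in $H$ thanks to the bound $\sum_{j \ne i} \abs{u_j}^2 / \abs{\lambda_j - \lambda_i}^2 \le \delta^{-2} \norme{u}^2$, and the remaining coordinate $h_i$ is then uniquely fixed by $h_i \bar{v_i} + \sum_{j \ne i} u_j \bar{v_j}/(\lambda_j - \lambda_i) = -1$, using $v_i \ne 0$. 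Conclusion (1) holds in both subcases.

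The substantive case is $u_i \bar{v_i} \ne 0$. For $z$ near but distinct from $\lambda_i$, the Sherman--Morrison identity tells us that $T - z$ is invertible if and only if
$$
g(z) := 1 + \sum_j \frac{u_j \bar{v_j}}{\lambda_j - z} \ne 0.
$$
The function $g$ is meromorphic on a neighbourhood of $\lambda_i$ with a simple pole of residue $u_i \bar{v_i} \ne 0$, so $\abs{g(z)} \to \infty$ as $z \to \lambda_i$. Hence $T - z$ is invertible throughout a small punctured disk around $\lambda_i$, i.e.\ $\lambda_i$ is either in the resolvent set of $T$ or isolated in $\Spec{T}$.

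The main obstacle is to promote ``isolated in the spectrum'' to ``eigenvalue''. For this I would invoke Weyl's theorem on compact perturbations: since $\lambda_i$ is an isolated simple eigenvalue of $D$, it does not lie in $\Spece{D} = \Spece{T}$, so $T - \lambda_i$ is Fredholm; a point isolated in the spectrum at which the operator is Fredholm is a pole of the resolvent with finite-rank Riesz projection, and the restriction of $T$ to the range of that projection is a finite-dimensional operator of spectrum $\lbrace \lambda_i \rbrace$, hence has $\lambda_i$ as an eigenvalue. But in the present case $u_i \bar{v_i} \ne 0$ this is impossible: the constraint $u_i \ps{h}{v} = 0$ combined with $u_i \ne 0$ forces $\ps{h}{v} = 0$, so $(D - \lambda_i) h = 0$ and $h = \alpha e_i$, and then $\alpha \bar{v_i} = \ps{h}{v} = 0$ together with $v_i \ne 0$ forces $h = 0$. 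Therefore $\lambda_i \notin \Spec{T}$, giving conclusion (2).
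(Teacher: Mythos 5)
The paper records this as a theorem of Ionascu, citing \cite{Ionascu_2001} without reproducing a proof, so there is nothing in the text to compare against; I will therefore assess your argument on its own. It is correct and complete. The trichotomy on $u_i\bar v_i$ is the right one: when $v_i=0$ the basis vector $e_i$ is itself an eigenvector of $T$; when $u_i=0$ and $v_i\neq 0$ your explicit eigenvector is a genuine element of $H$ because the isolation gap $\delta=\mathrm{dist}(\lambda_i,\{\lambda_j:j\neq i\})>0$ keeps $\sum_{j\neq i}|u_j|^2/|\lambda_j-\lambda_i|^2\le\delta^{-2}\|u\|^2$ finite, and the remaining coordinate $h_i$ is indeed forced by the normalisation $\ps{h}{v}=-1$ (which in turn guarantees $h\neq 0$); and when $u_i\bar v_i\neq 0$ the projection of the eigenvalue equation onto $e_i$ kills any would-be eigenvector, using that $\Ker(D-\lambda_i)=\C e_i$ since the $\lambda_j$ are distinct.

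One remark on economy: in the third case the detour through the Sherman--Morrison function $g$ and the Riesz projection is more machinery than you need. Since $\lambda_i$ is isolated in $\sigma(D)$ and $\dim\Ker(D-\lambda_i)=1$, the operator $D-\lambda_i$ has closed range equal to $\{e_i\}^{\perp}$ and is Fredholm of index $0$; a rank one perturbation preserves both Fredholmness and the index, so $T-\lambda_i$ is Fredholm of index $0$ as well. An index-zero Fredholm operator with trivial kernel is automatically surjective, hence invertible. Thus, once you have shown $\Ker(T-\lambda_i)=\{0\}$, you may conclude $\lambda_i\notin\sigma(T)$ directly, without first establishing that $\lambda_i$ is at worst isolated in $\sigma(T)$ or appealing to the finite-rank Riesz projection. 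This shortens the argument but does not change its substance.
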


  This result tells us that if $\Spec{D}$ possesses some isolated points, then the answer to the Question \ref{QuestionIonascu} is no as well.

  Let $E \subset \C $ be a subset of the complex plane. We say that $E$ is a \emph{perfect} set if it has no isolated points.
  Summering the two results above we see that if $D$ has no cyclic vectors or if $\Spec{D}$ has an isolated point, then the answer to Question \ref{QuestionIonascu} is no.

The next result is again due to Ionascu, and gives necessary and sufficient conditions for $z$ to be an eigenvalue of $D + \tens{u}{v} $.

\begin{proposition}[Ionascu \cite{Ionascu_2001}] 
  \label{PropIonascuCnsRangUnVp}
  Let $D = \sum_{i \in \N} \lambda_i \tens{e_i}{e_i} $ be a diagonal operator such that for every $i \ne j $, we have $\lambda_i \ne \lambda_j $. Let $u,v \in H $ be two vectors such that for every $i \in \N$ we have $\ps{u}{e_i} \ne 0 $ and $\ps{v}{e_i} \ne 0 $. Then $z \in \Specp{D + \tens{u}{v}}$ if and only if
  \begin{enumerate}
    \item $z \notin \Specp{D}$,
    \item $\sum_{i \in \N} \frac{\abs{\ps{u}{e_i}}^2}{\abs{z - \lambda_i}^2} < \infty $,
    \item $\sum_{i \in \N} \frac{\ps{u}{e_i} \conj{\ps{v}{e_i}}}{z-\lambda_i} = 1 $.
  \end{enumerate}
\end{proposition}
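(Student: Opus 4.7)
The strategy is to unpack what it means for $z$ to be an eigenvalue of $D+\tens{u}{v}$ and then read off the three conditions one at a time. An eigenvector $h\ne 0$ at $z$ satisfies
$$ (D-z)h = -\ps{h}{v}\, u, $$
so the plan is to split on whether $\ps{h}{v}$ is zero or not, compute the coordinates of $h$ in the basis $(e_i)$, and impose (a) that $h$ actually lies in $H$, and (b) that the resulting scalar constraint is consistent.

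First I would rule out $z\in\Specp{D}$. Write the equation coordinatewise: $(\lambda_i-z)h_i = -\ps{h}{v}u_i$. If $z=\lambda_j$ then the $j$-th equation forces $\ps{h}{v}u_j=0$; since $u_j=\ps{u}{e_j}\ne 0$ by hypothesis, we must have $\ps{h}{v}=0$, so $(D-z)h=0$ and hence $h\in\Ker(D-\lambda_j)=\C e_j$. But then $\ps{h}{v}=\conj{v_j}\ps{h}{e_j}\ne 0$, a contradiction. This gives condition (1).

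Next, assume $z\notin\Specp{D}$, so $\lambda_i-z\ne 0$ for every $i$. If $\ps{h}{v}=0$, then $(D-z)h=0$ forces $h=0$. So $\ps{h}{v}\ne 0$, and normalizing we may assume $\ps{h}{v}=-1$. The coordinate equation then gives
$$ h_i = \frac{u_i}{\lambda_i-z}. $$
Requiring $h\in H$ is exactly the square-summability condition (2). Plugging these coordinates into the normalization $\ps{h}{v}=-1$ yields
$$ -1 = \ps{h}{v} = \sum_{i\in\N} h_i\, \conj{v_i} = \sum_{i\in\N} \frac{u_i\,\conj{v_i}}{\lambda_i-z}, $$
which is precisely condition (3) (after multiplying by $-1$ inside the sum). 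For the converse, given (1)--(3), I would define $h_i = u_i/(\lambda_i - z)$; condition (2) ensures $h\in H$ and $h\ne 0$ (since every $u_i\ne 0$), condition (3) ensures $\ps{h}{v}=-1$, and a direct check gives $(D-z)h=u=-\ps{h}{v}u$, i.e.\ $(D+\tens{u}{v})h=zh$.

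There is no real obstacle here: the proof is essentially a bookkeeping argument once one writes down the eigenvalue equation coordinatewise. The only subtle point is handling the case $z\in\Specp{D}$, which must be excluded at the outset by using $u_i\ne 0$ and $v_i\ne 0$; everything else reduces to solving a single linear equation in the unknown scalar $\ps{h}{v}$.
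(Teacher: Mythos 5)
Your argument is correct, and since the paper simply cites this proposition from Ionascu without reproducing a proof, there is nothing in the text to compare it against; your coordinatewise analysis of $(D-z)h=-\ps{h}{v}u$, splitting on whether $z\in\Specp{D}$ and normalizing $\ps{h}{v}=-1$, is the standard argument and establishes both implications. The only point worth noting is that for the unbounded case one should also observe that $h$ defined by $h_i=\ps{u}{e_i}/(\lambda_i-z)$ lies in $\Dom(D)$, which follows immediately from condition (2) together with $u\in H$.
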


  The condition (1) from Proposition \ref{PropIonascuCnsRangUnVp} states that if $z=\lambda_i$, then $z$ cannot be an eigenvalue of $D + \tens{u}{v} $. We remark that condition (2) is equivalent to the fact that $u$ belongs to $ \Range(D-z)$. 
  Remark that if $z \notin \Spec{D}$, as $D-z$ is invertible, we have that condition (2) is automatically satisfied. 
  
  In order to prove Theorem \ref{ThPertRangUnOpDiag}, we will build a vector $u \in H$ such that for all $i \in \N$, $\ps{u}{e_i} \ne 0 $ and for all $z \in \Spec{D} \setminus \Specp{D}$ we have
  $$
    \sum_{i \in \N} \frac{\abs{\ps{u}{e_i}}^2}{\abs{z - \lambda_i}^2} = \infty. 
  $$
  In this way, condition (2) of Proposition \ref{PropIonascuCnsRangUnVp} will not be satisfied when $ z \in \Spec{D} \setminus \Specp{D} $, and this will prove that $(\Spec{D} \setminus \Specp{D}) \cap \Specp{D + \tens{u}{v}} = \emptyset $.
  
  Then we will construct a vector $v \in H$ such that for all $i \in \N$, $\ps{v}{e_i} \ne 0 $ and for all $z \in \C \setminus \Spec{D}$ 
  $$
    \sum_{i \in \N} \frac{\ps{u}{e_i} \conj{\ps{v}{e_i}}}{z-\lambda_i} \ne 1.
  $$
  This boils down to construct an analytic function of the form
  $$
    \sum_{i \in \N} \frac{c_i}{z- \lambda_i} - 1
  $$
  which does not vanish on $\C \setminus \Spec{D} $, with a sumability condition on the coefficients $c_i$ that will guaranty that for a suitable choice of $u$ and $v \in H$, $c_i = \ps{u}{e_i} \conj{\ps{v}{e_i}}$. So condition (3) of Proposition \ref{PropIonascuCnsRangUnVp} won't be satisfied for all $z \in \C \setminus \Spec{D}$. So we will have that $(\C \setminus \Spec{D}) \cap \Specp{D + \tens{u}{v}} = \emptyset $. According to condition (1), we will have that $\Specp{D} \cap \Specp{D + \tens{u}{v}} = \emptyset $ and $D + \tens{u}{v} $ won't have any eigenvalue. Thus a positive answer to Question \ref{QuestionIonascu} will follow.
  
\section{Preliminaries}

  Before carrying out the two steps in the proof of Theorem \ref{ThPertRangUnOpDiag}, we will need tools, which we present in this section. The first result is a classical theorem from measure theory. A more general version of this one can be found in in \cite[Th~32]{Rogers_1970}.
  
\begin{theorem}
  \label{ThEnsMesNulleRogers}
  Let $E \subset \C $ be a measurable subset of the complex plane of Lebesgue measure zero. Then there exists a family of open balls $(O_i)_{i \in \N} $ such that
    $ E \subset \cup_{i \in \N} O_i $,
     $\sum_{i \in \N} \diam(O_i)^2 < \infty $,
    and for all $z \in E$, there exist infinitely many $i \in \N$ such that $z \in O_i$.
\end{theorem}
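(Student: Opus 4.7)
The plan is to cover $E$ by a sequence of open covers with rapidly decreasing total squared diameter and then concatenate them, so that every point of $E$ is captured infinitely often---a Borel--Cantelli style construction.

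First I would establish the quantitative covering lemma: for every $\epsilon > 0$ there is a countable family of open balls $(B_k)_{k \in \N}$ with $E \subset \bigcup_k B_k$ and $\sum_k \diam(B_k)^2 \le \epsilon$. Indeed, planar Lebesgue outer measure can be computed from open squares, and since $m(E) = 0$, for each $\epsilon' > 0$ there exist open squares $(Q_k)_{k \in \N}$ of side lengths $s_k$ covering $E$ with $\sum_k s_k^2 \le \epsilon'$. Taking $B_k$ to be the open ball with the same center as $Q_k$ and diameter $\sqrt{2}\, s_k$, one has $Q_k \subset B_k$ and
$$
  \sum_k \diam(B_k)^2 \;=\; 2 \sum_k s_k^2 \;\le\; 2\epsilon',
$$
so $\epsilon' = \epsilon / 2$ does the job.

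For each $n \in \N$, apply this lemma with $\epsilon = 2^{-n}$ to obtain a countable cover $(B_{n,k})_{k \in \N}$ of $E$ satisfying $\sum_k \diam(B_{n,k})^2 \le 2^{-n}$, and reindex the doubly indexed family $(B_{n,k})_{n,k \in \N}$ as a single sequence $(O_i)_{i \in \N}$ via any bijection $\N \times \N \to \N$. Then $E \subset \bigcup_i O_i$ is clear, and Tonelli gives
$$
  \sum_{i \in \N} \diam(O_i)^2 \;=\; \sum_{n \in \N} \sum_{k \in \N} \diam(B_{n,k})^2 \;\le\; \sum_{n \in \N} 2^{-n} \;<\; \infty;
$$
finally, for every $z \in E$ and every $n \in \N$ some $B_{n,k}$ contains $z$, so $z$ lies in infinitely many of the $O_i$. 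The only step with real content is the quantitative covering lemma; everything else is bookkeeping.
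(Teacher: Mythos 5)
Your proof is correct. Note that the paper itself does not prove this statement at all: it is quoted as a known result, with a pointer to Rogers' book (Theorem 32 there), where a more general version is established for Hausdorff-type set functions (covers weighted by an arbitrary premeasure of the diameter, not just its square, and in general metric spaces). Your argument is the natural elementary specialization to the planar, squared-diameter case: the quantitative covering lemma (null sets admit open-ball covers of arbitrarily small total squared diameter, obtained from square covers at the cost of a factor $2$) plus the Borel--Cantelli-style concatenation of covers with totals $2^{-n}$, which forces every point of $E$ to lie in at least one ball of each generation and hence in infinitely many members of the reindexed family. The only point worth making explicit is the one you flag: that planar Lebesgue outer measure can be computed using covers by open squares (or that a rectangle can be covered by squares with at most a bounded loss in total area), which is standard. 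So your proposal buys a self-contained proof of exactly the statement the paper needs, while the citation to Rogers buys generality that the paper never uses.
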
  

  We will also use the notion of point of Lebesgue density of points of a measurable subset of the complex plane. 
  In the following,  $B$ will always denote a ball of the complex plane.
  Recall that if $E \subset \C$ is a measurable subset of the complex plane, and $z \in \C$, we say that $z$ is a point of Lebesgue density of $E$ if
  $$
    \dens(z,E) = \lim_{\m(B)\rightarrow 0, z \in B} \frac{\m(E \cap B)}{\m(B)} = 1.
  $$ 
  
  If $z$ is a point of Lebesgue density of $E$, then there are "a lot" of points belonging to $E$ around $z$ (in the sense of the Lebesgue measure). 
 Recall that Lebesgue density theorem says that for almost every $z \in \C$,
  $
    \dens(z,E) = \ind{E}(z)
  $.
  It is also possible to replace the balls $B$ by a family a measurable subset that shrinks regularly to $z$. 
 Remind that a collection of measurable subsets $(U_\alpha)_{\alpha \in A}$ of $\C$ is said to shrink regularly to $z$ if there exists a constant $c>0$ such that for all $U_\alpha$, there exists a ball $B$ such that 
  $
    z \in B, \, U_\alpha \subset B \, \textrm{ and } \, \m(U_\alpha) \ge c \m(B)
  $.
  More information about Lebesgue density can be found in \cite{Stein_Shakarchi_2005}.

\section{A vector not in $\Range(D-z)$}

The goal of this section is to prove that if $D$ satisfy the hypothesis of Theorem \ref{ThPertRangUnOpDiag}, 
then there exists a vector $u \in H$ such that for every $z \in \Spec{D} \setminus \Specp{D} $, $u$ does not belong to $\Range{(D - z)}$. First we rephrase Proposition \ref{PropUPasDansImDMoinsZIntro}.

\begin{proposition}
  \label{PropUPasDansImDMoinsZ}
  Let $D = \sum_{i \in \N} \lambda_i \tens{e_i}{e_i}$ be a bounded diagonal operator such that for every $i \ne j $, $\lambda_i \ne \lambda_j $ and $\Spec{D} $ is a perfect compact set.
  Then there exists a vector $u \in H$ such that 
  \begin{enumerate}
    \item for all $i \in \N$, $\ps{u}{e_i} \ne 0 $,
    \item for all $z \in \Spec{D} \setminus \Specp{D}$,
    $$
    \sum_{i \in \N} \frac{\abs{\ps{u}{e_i}}^2}{\abs{z-\lambda_i}^2} = \infty.
    $$
  \end{enumerate}
\end{proposition}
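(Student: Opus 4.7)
The plan is to split $E := K \setminus \{\lambda_i : i \in \N\}$ into its subset $E_1 := E \cap K^*$ of Lebesgue density points of $K$ and its complement $E_0 := E \setminus K^*$. By the Lebesgue density theorem, $E_0$ has Lebesgue measure zero. I will set $|u_i|^{2} := a_i + b_i + \gamma_i$, where $(\gamma_i)$ is a summable sequence of strictly positive reals (to guarantee $u_i \ne 0$ for every $i$), while $(b_i)$ is designed to force divergence of the series on $E_0$ and $(a_i)$ to force divergence on $E_1$.

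For $E_0$, apply Theorem~\ref{ThEnsMesNulleRogers} to obtain open balls $(O_n)_{n \in \N}$ covering $E_0$ with $\sum_n \diam(O_n)^{2} < \infty$ and every $z \in E_0$ contained in infinitely many $O_n$. Each $O_n$ meets $K$, so by density there exists $\lambda_{j(n)} \in O_n$; set $b_i := \sum_{n : j(n) = i} \diam(O_n)^{2}$, so $\sum_i b_i < \infty$. For $z \in E_0$ and any $n$ with $z \in O_n$ we have $b_{j(n)}/|z - \lambda_{j(n)}|^{2} \ge 1$, and the indices $j(n)$ that appear for this $z$ cannot collapse to finitely many values (otherwise some $\lambda_j$ would satisfy $|z - \lambda_j| < \diam(O_n) \to 0$, forcing $z = \lambda_j$ and contradicting $z \in E$). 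Hence the series diverges on $E_0$.

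For $E_1$, use a multi-scale construction. For each $n \ge 1$ fix a finite Borel partition $\pi_n$ of $K$ into pieces $P$ with $\diam(P) \le 2^{-n}$, and for each $P \in \pi_n$ choose a representative $\lambda_{i(P, n)} \in P$ (possible by density of $(\lambda_i)$ in $K$). Define
\[
a_i := \sum_{n \ge 1} \frac{1}{n^{2}} \sum_{\substack{P \in \pi_n \\ i(P, n) = i}} \m(P),
\]
so that $\sum_i a_i \le \m(K) \sum_n n^{-2} < \infty$, and write $\mu := \sum_i a_i \delta_{\lambda_i}$. The key geometric observation is that if $P \in \pi_n$ meets $B(z, r - 2^{-n})$ then $\lambda_{i(P, n)} \in B(z, r)$; summing over such pieces yields a contribution of at least $\m(K \cap B(z, r - 2^{-n}))$ from level $n$. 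Restricting to scales $n$ with $2^{-n} \le r/2$ produces
\[
\mu(B(z, r)) \;\ge\; \m\bigl(K \cap B(z, r/2)\bigr) \sum_{n \ge N(r)} \frac{1}{n^{2}}, \qquad N(r) := \lceil \log_{2}(2/r) \rceil.
\]
For $z \in E_1$, the density condition gives $\m(K \cap B(z, r/2)) \gtrsim r^{2}$ as $r \to 0$, while $\sum_{n \ge N(r)} n^{-2} \gtrsim 1/N(r) \sim 1/\log(1/r)$. Hence $\mu(B(z, r)) \gtrsim r^{2}/\log(1/r)$ for small $r$, and the layer-cake identity
\[
\sum_i \frac{a_i}{|z - \lambda_i|^{2}} \;=\; 2 \int_{0}^{\infty} \frac{\mu(B(z, r))}{r^{3}}\, dr
\]
yields divergence, since $\int_{0}^{r_{0}} dr/(r \log(1/r)) = \infty$.

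The main technical obstacle is precisely the density-point estimate above: the weighting $1/n^{2}$ is tuned so that the tail sum over scales $n \ge N(r)$ loses only a single logarithmic factor $1/\log(1/r)$, which lies exactly on the edge of what $\int dr/(r \log(1/r))$ can tolerate while still diverging. Combining $(a_i)$, $(b_i)$, and the small positive summable perturbation $(\gamma_i)$ then produces $u \in H$ with all the required properties.
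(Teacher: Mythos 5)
Your construction is correct in substance. The treatment of the exceptional null set is the same as in the paper: both arguments cover the non-density points by balls $(O_n)_{n\in\N}$ from Theorem~\ref{ThEnsMesNulleRogers} with $\sum_n \diam(O_n)^2<\infty$, place one eigenvalue in each ball and weight it by $\diam(O_n)$, so that every ball containing $z$ contributes a term $\ge 1$; your remark that the contributing indices cannot collapse to finitely many values (since $\diam(O_n)\to 0$ would force $z=\lambda_j$) replaces the paper's bookkeeping, which keeps the indices $j(n)$ pairwise distinct by construction, and either device works. Where you genuinely depart from the paper is at the density points. The paper follows Stampfli: it selects one eigenvalue in each dyadic square meeting the density set, weights it by $1/(n\sqrt{\beta_n})$, and, for a density point $z$, uses the regularly shrinking blocks $L_{n,p}(z)$ to find at least $p$ selected eigenvalues within distance $p\sqrt{2}\,2^{-n}$ of $z$; each scale then contributes at least a constant times $(n-\alpha)2^{2n}$ to the unweighted sum, and the weighted series diverges like $\sum_n (n-\alpha)/n^2$. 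You instead package the multi-scale selection as a finite measure $\mu=\sum_i a_i\delta_{\lambda_i}$ of total mass at most $\m(K)\sum_n n^{-2}$, prove $\mu(B(z,r))\gtrsim r^2/\log(1/r)$ at density points, and conclude via the layer-cake identity $\sum_i a_i\abs{z-\lambda_i}^{-2}=2\int_0^\infty \mu(B(z,r))r^{-3}\,dr$ and the divergence of $\int_0^{r_0}\frac{dr}{r\log(1/r)}$. The two proofs exploit exactly the same borderline logarithmic gain (your tail $\sum_{n\ge N(r)}n^{-2}\sim 1/\log(1/r)$ plays the role of the paper's harmonic sum $\sum_p 1/p$), but your version dispenses with the regular-shrinking lemma, the sets $I'_{n,p}$ and the square-counting, and also with the paper's removal of the dyadic grid lines $F_n$; it is noticeably cleaner, at the cost of being less explicit about which eigenvalues carry the mass, which the statement does not require anyway.

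One small repair is needed. You choose, for each piece $P\in\pi_n$ of a Borel partition of $K$ with $\diam(P)\le 2^{-n}$, a representative $\lambda_{i(P,n)}\in P$ ``by density of $(\lambda_i)$ in $K$''. Density only guarantees eigenvalues in every nonempty relatively open subset of $K$; a Borel piece $P$ (for instance a dyadic cell whose intersection with $K$ lies entirely on the cell's boundary) may contain no $\lambda_i$ at all. The fix is immediate: choose $\lambda_{i(P,n)}$ at distance less than $2^{-n}$ from some point of $P$, which is always possible since $\emptyset\ne P\subset K$. Then any piece meeting $B(z,r-2\cdot 2^{-n})$ has its representative in $B(z,r)$, and restricting to scales with $2^{-n}\le r/4$ gives $\mu(B(z,r))\ge \m(K\cap B(z,r/2))\sum_{n\ge N'(r)}n^{-2}$ with $N'(r)=\lceil\log_2(4/r)\rceil$, which changes nothing in the final estimate. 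With that adjustment, and with the strictly positive summable $(\gamma_i)$ guaranteeing $\ps{u}{e_i}\ne 0$ for every $i$, your vector $u$ satisfies both conclusions of the proposition.
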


The proof of this Proposition is inspired from a paper of Stampfli \cite{Stampfli_2001}. We will divide the proof in several lemmas.

  Without loss of generality, we can suppose that $\Spec{D} \subset [0,1]\times[0,1]$. For all $n \in \N$, we denote by $(C_{n,k})_{k=0}^{2^{2n}}$ the family of closed dyadic squares $[i2^{-n},(i+1)2^{-n}]\times[j2^{-n},(j+1)2^{-n}] $ with $i,j \in \lbrace 0, \dots, 2^n - 1 \rbrace $.
 We denote by $D_{n,k} $ the interior of the square $C_{n,k}$. We also denote by $F_n$ the boundary of all the dyadic squares at stage $n$, i.e.
  $$
  F_n = \bigcup_{k=0}^{2^n} \lbrace (x,y) \in [0,1]\times[0,1]: x= k2^{-n} \textrm{ or } y = k2^{-n} \rbrace.
  $$


  We set
  \begin{align*}
    A_1 = \lbrace z \in \Spec{D}: \dens(z,\Spec{D}) = 1 \rbrace \setminus \cup_{n \in \N} F_n 
    \quad \textrm{and} \quad
    A_2 = \Spec{D} \setminus A_1.
  \end{align*}
  
  Using Lebesgue density Theorem 
and the fact that $\m(\cup_{n} F_n) = 0 $, we get that $\m(A_1) = \m(\Spec{D})$ and $\m(A_2) =0 $.  
  
\begin{lemma}
  If $D_{n,k} \cap A_1 \ne \emptyset $, then there are infinitely many eigenvalues of $D$ lying inside $D_{n,k}$.
\end{lemma}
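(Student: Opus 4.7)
The plan is to exploit two structural facts about the situation. First, since $D$ is diagonal in $(e_i)_{i \in \N}$, its spectrum is the closure of its diagonal entries: $\Spec{D} = \overline{\{\lambda_i : i \in \N\}}$, so the eigenvalues are automatically dense in $\Spec{D}$. Second, by the standing hypothesis $\Spec{D}$ is a perfect compact set, so every point of $\Spec{D}$ is a limit point of $\Spec{D}$ itself.

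Given these, the lemma reduces to a short topological argument. I would pick any $z \in D_{n,k} \cap A_1$; since $A_1 \subset \Spec{D}$ by the definition of $A_1$, and $D_{n,k}$ is open, I can choose $r > 0$ with $B(z, r) \subset D_{n,k}$. Suppose, for contradiction, that only finitely many of the $\lambda_i$, say $\lambda_{i_1}, \dots, \lambda_{i_m}$, lie in $B(z, r)$. Then for any $w \in B(z, r) \setminus \{\lambda_{i_1}, \dots, \lambda_{i_m}\}$, the positive distance of $w$ to this finite set and to $\Frontiere B(z, r)$ yields a small ball around $w$ sitting inside $B(z, r)$ and meeting no $\lambda_i$, so $w \notin \overline{\{\lambda_i : i \in \N\}} = \Spec{D}$. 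Consequently $\Spec{D} \cap B(z, r) \subseteq \{\lambda_{i_1}, \dots, \lambda_{i_m}\}$ is finite, contradicting the fact that $z \in \Spec{D}$ is not isolated in $\Spec{D}$ (by perfectness). Since the $\lambda_i$ are pairwise distinct (a consequence of the cyclic vector hypothesis), the infinitely many indices $i$ with $\lambda_i \in B(z, r) \subset D_{n,k}$ produce infinitely many distinct eigenvalues of $D$ inside $D_{n,k}$.

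It is worth noting that this argument uses only the inclusion $A_1 \subset \Spec{D}$ together with perfectness; the finer information encoded in $A_1$, namely the Lebesgue density condition $\dens(z, \Spec{D}) = 1$ and the avoidance of the grid $\cup_n F_n$, plays no role here and is presumably reserved for the quantitative estimates in the subsequent construction of $u$. I do not foresee a genuine obstacle at this step: the lemma is essentially a restatement of the two structural facts above.
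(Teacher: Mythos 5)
Your argument is correct, but it takes a genuinely different route from the paper's. You use only that $A_1 \subset \Spec{D}$, that $\Spec{D}$ is a perfect set, and that the eigenvalues of the diagonal operator are dense in $\Spec{D}$ and pairwise distinct: from these, a purely topological argument shows that any open square meeting $\Spec{D}$ already contains infinitely many $\lambda_i$. The paper instead exploits the defining property of points of $A_1$, namely $\dens(z,\Spec{D})=1$: it shows by contradiction that $\m(D_{n,k}\cap\Spec{D})>0$ (if this measure were zero, then for small $r$ the ball $B(z,r)$ lies inside the open square $D_{n,k}$ and meets $\Spec{D}$ in a null set, forcing the density at $z$ to be $0$), and then invokes perfectness together with the density of $\Specp{D}$ in $\Spec{D}$ to produce infinitely many eigenvalues in $D_{n,k}$. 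Your proof is more elementary and yields a slightly stronger statement (the hypothesis $D_{n,k}\cap A_1 \ne \emptyset$ could be weakened to $D_{n,k}\cap\Spec{D} \ne \emptyset$), and your closing observation is accurate: the Lebesgue density information is not needed at this step, but it is precisely what drives the later quantitative argument, namely the estimate $\m(\Spec{D}\cap L_{n,p}(z)) > \frac{3}{4}\m(L_{n,p}(z))$ and the resulting count of subsquares meeting $A_1$, in the proof that $u_1 \notin \Range(D-z)$; the paper's measure-theoretic treatment of this lemma serves as a warm-up for that later counting, whereas your version cleanly decouples the two.
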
  
  
\begin{proof}
  Let $z \in D_{n,k} \cap A_1$. From the definition of $A_1$, we have $ \dens(z,\Spec{D}) = 1 $. In other words, if we denote by $B(z,r) $ the ball centered at $z$ of radius $r$, we get that
  $$
  \lim_{r \rightarrow 0} \frac{\m(B(z,r)\cap \Spec{D})}{\m(B(z,r))} = 1.
  $$
  In order to prove the Lemma, we will prove by contradiction that $\m(D_{n,k} \cap \Spec{D}) > 0 $, as $\Spec{D}$ is a perfect set and $\Specp{D}$ is dense in $\Spec{D} $, this will prove the lemma. Suppose that $\m(D_{n,k} \cap \Spec{D}) = 0 $. Then
  \begin{align*}
  \m(B(z,r) &\cap \Spec{D}) \\
    &= \m(B(z,r) \cap D_{n,k} \cap \Spec{D}) + \m(B(z,r) \cap (\C \setminus D_{n,k}) \cap \Spec{D}) \\
    &= \m(B(z,r) \cap (\C \setminus D_{n,k}) \cap \Spec{D}).
  \end{align*}
  When $r$ is small enough, we get that $B(z,r) \subset D_{n,k}$ (because $D_{n,k}$ is open), so $\m(B(z,r) \cap (\C \setminus D_{n,k}) \cap \Spec{D}) = \m(\emptyset) = 0$. So $\m(B(z,r) \cap \Spec{D})= 0$ and 
    $$
  \lim_{r \rightarrow 0} \frac{\m(B(z,r)\cap \Spec{D})}{\m(B(z,r))} = 0.
  $$
  This contradicts the assumption that $z \in A_1$ and $ \dens(z,\Spec{D})=1 $.
\end{proof}  
  
  Since $\m(A_2)=0$, Theorem \ref{ThEnsMesNulleRogers} implies that there exists a family $(O_i)_{i \in \N} $ of open subsets of the complex plane $\C$ such that $ A_2 \subset \cup_{i \in \N} O_i $, for all $i \in \N$, $O_i \cap \Spec{D} \ne \emptyset $, $\sum_{i \in \N} \diam(O_i)^2 < \infty $ and for all $z \in A_2$, there exist infinitely many $i \in \N$ such that $z \in O_i$. Now we can renumber the eigenvalues $\lambda_i$ and the eigenvectors $e_i$ of $D$ by stage.  

We initialize at stage $0$ ($n = 0$). We have that $C_{0,1} = [0,1]\times[0,1]$ and $D_{0,1} = ]0,1[\times]0,1[$. If $D_{0,1} \cap A_1 \ne \emptyset $, we choose $i(0,1) \in \N$ such that $\lambda_{i(0,1)} \in D_{0,1} \cap \Spec{D} $. Otherwise we do nothing. Write $E_0 = \lbrace k \in \N : D_{0,k} \cap A_1 \ne \emptyset \rbrace$, and  $I_0 = \lbrace i(0,k):  k \in E_0 \rbrace$.

Then we choose $j(0) \in \N \setminus I_0$ such that $\lambda_{j(0)} \in \Spec{D} \cap O_0 $ (we can do it because $O_0 \cap \Spec{D} \ne \emptyset $, $\Spec{D}$ is a perfect set and $O_0 $ is open). Denote by $J_0 = \lbrace j(0) \rbrace$.

Once the stages $0, \dots, n-1$ are complete, we proceed with stage $n$ 
. For each $k \in \lbrace 1, \dots, 2^{2n} \rbrace $, if $D_{n,k} \cap A_1 \ne \emptyset $, we choose $i(n,k) \in \N \setminus (J_{n-1} \cup (\cup_{k=0}^{n-1} I_k))$ such that $\lambda_{i(n,k)} \in D_{n,k} \cap \Spec{D} $.  Otherwise we do nothing. Denote by $E_n = \lbrace k \in \N : D_{n,k} \cap A_1 \ne \emptyset \rbrace$, and $I_n = \lbrace i(n,k): k \in E_n \rbrace$.

We choose $j(n) \in \N \setminus (J_{n-1} \cup (\cup_{k=0}^n I_k))$ such that $\lambda_{j(n)} \in \Spec{D} \cap O_n $ (we can do it because $O_n \cap \Spec{D} \ne \emptyset $, $\Spec{D}$ is a perfect set and $O_n$ is open). Denote by $J_n = \lbrace j(k): k=0, \dots, n \rbrace$.

From the construction, we get for all $n,m \in \N$ such that $n \ne m $ that
$$
 J_n \cap J_m = \emptyset, \quad I_n \cap I_m = \emptyset, \quad J_n \cap I_m = \emptyset, \quad J_n \cap I_n = \emptyset.
$$
In other words, the $\lambda_{i(n,k)}$ and the $\lambda_{j(n)} $ are all distinct.


At the end of the renumbering, it is possible that we "forgot" some $\lambda_i$ (i.e. it is possible that $\N \ne (\cup_{n \in \N} I_n) \cup (\cup_{m \in \N} J_m)$). We will decompose our Hilbert space $H$ into three subspaces: $H = H_1 \oplus H_2 \oplus H_{reste} $ with
\begin{align*}
  H_1 &= \Span{ e_{i(n,k)}: n \in \N,  k  \in E_n} = \Span{ e_i : i \in \cup_{n \in \N} I_n }, \\
  H_2 &= \Span{ e_{j(n)}: n \in \N} = \Span{ e_j : j \in \cup_{n \in \N} J_n }, \\
  H_{r} &= \Span{ e_i: i \notin \cup_{n \in \N} I_n \cup_{m \in \N} J_m}.
\end{align*}

 Denote by $\beta_n$ the number of elements in $I_n$, i.e. the number of eigenvalues chosen in the dyadic square at stage $n$. Define the vectors $u_1, u_2$ and $u_{r}$ in the following way:
 $$ 
 u_1 = \sum_{n \in \N} \sum_{k \in E_n} \frac{1}{n\sqrt{\beta_n}} e_{i(n,k)}, \quad
 u_2 = \sum_{n \in \N} \diam(O_n) e_{j(n)}
 $$
 $$ \quad \textrm{and} \quad
 u_{r} = \sum_{i \in \N: \forall n \in \N, i \notin I_n \cup J_n} \frac{1}{i} e_i .
 $$

We have that
$$
  \norme{u_1}^2
    = \sum_{n \in \N} \sum_{k \in E_n} \abs{\ps{u}{e_{i(n,k)}}}^2 
    = \sum_{n \in \N} \sum_{k \in E_n} \frac{1}{n^2 \beta_n} 
    = \sum_{n \in \N} \frac{1}{n^2} < \infty. 
$$
So we have that $u_1 \in H_1$.
Concerning $u_2$, we have
$$
  \norme{u_2}^2 = \sum_{n \in \N} \abs{\ps{u}{e_{j(n)}}}^2 = \sum_{n \in \N} \diam(O_n)^2 < \infty.
$$
So we have $u_2 \in H_2$. We also get that $u_{r} \in H_{r}$. Denote by $u = u_1 + u_2 + u_{r}$. We have that $u \in H$.

In order to complete the proof of Proposition \ref{PropUPasDansImDMoinsZ} we will show that if $z \in A_1 \setminus \Specp{D} $ then $u_1 \notin \Range(D-z) $, and if $z \in A_2 \setminus \Specp{D}$, then $u_2 \notin \Range(D-z)$. This will enable us to conclude the proof of Proposition \ref{PropUPasDansImDMoinsZ}. Indeed if $z \in \Spec{D}$ then either $z \in A_1 $, or $z \in A_2 $. If $z \in A_i $, $i =1,2$, then $u_i \notin \Range(D-z) $. So $u \notin \Range(D-z) $.

\begin{lemma}
  Let $z \in A_1 \setminus \Specp{D} $. Then $u_1 \notin \Range(D-z) $.
\end{lemma}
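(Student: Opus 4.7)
The plan is to invoke the standard characterization of the range of a diagonal operator: since $z\notin\Specp{D}$, a vector $u\in H$ belongs to $\Range(D-z)$ if and only if $\sum_i\abs{\ps{u}{e_i}}^2/\abs{\lambda_i-z}^2<\infty$. Given the definition of $u_1$, the task reduces to proving
$$
\sum_{n}\sum_{k\in E_n}\frac{1}{n^2\beta_n\abs{\lambda_{i(n,k)}-z}^2}=\infty.
$$

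Since $z\in A_1$ avoids every $F_n$, at each stage $n$ there is a unique subsquare $D_{n,k_n}$ containing $z$, and the nested family $(D_{n,k_n})_n$ shrinks regularly to $z$ with diameter $\sqrt{2}\cdot 2^{-n}$. The identity $\m(A_2)=0$ lets me replace $\Spec{D}$ by $A_1$ in the density computation at $z$, and the regularly-shrinking form of the Lebesgue density theorem then yields, for any fixed $\epsilon\in(0,1/4)$, an integer $N=N(\epsilon)$ such that $\m(A_1\cap D_{m,k_m})\ge(1-\epsilon)\cdot 4^{-m}$ for every $m\ge N$. A direct area count then shows that at each stage $n>m$, the subsquares of $D_{m,k_m}$ that miss $A_1$ have total area at most $\epsilon\cdot 4^{-m}$, so at most $\epsilon\cdot 4^{n-m}$ of the $4^{n-m}$ stage-$n$ subsquares of $D_{m,k_m}$ lie outside $E_n$.

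The main step is a shell decomposition of the inner sum. I would partition the stage-$n$ subsquares of $D_{0,k_0}$ into the shells $\mathrm{Ring}_m:=D_{m,k_m}\setminus D_{m+1,k_{m+1}}$ for $m=0,\dots,n-1$, together with the central square $D_{n,k_n}$. Each $\mathrm{Ring}_m$ contains exactly $3\cdot 4^{n-m-1}$ stage-$n$ subsquares, and subtracting the at most $\epsilon\cdot 4^{n-m}$ empty ones leaves, for $m\ge N$ and $\epsilon<1/4$, at least $\tfrac{1}{2}\cdot 4^{n-m}$ subsquares in $E_n$. For each such $D_{n,k}\subset D_{m,k_m}$ the bound $\abs{\lambda_{i(n,k)}-z}\le\sqrt{2}\cdot 2^{-m}$ gives $1/\abs{\lambda_{i(n,k)}-z}^2\ge 4^m/2$, so the contribution of $\mathrm{Ring}_m$ to $S_n:=\sum_{k\in E_n}1/\abs{\lambda_{i(n,k)}-z}^2$ is $\ge 4^n/4$. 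Summing over $m=N,\dots,n-1$ yields $S_n\ge(n-N)\cdot 4^n/4$, i.e.\ $S_n\gtrsim n\cdot 4^n$ for $n$ large. Combined with the trivial bound $\beta_n\le 4^n$, the stage-$n$ contribution to the target sum is $\gtrsim 1/n$, and the divergence of $\sum_n 1/n$ completes the proof.

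The main obstacle is extracting the logarithmic factor $n$ out of $S_n$: using only the eigenvalue from the single square $D_{n,k_n}$ produces a stage-$n$ term of size $\sim 4^n/(n^2\beta_n)$, which is summable when $\beta_n$ is comparable to $4^n$. The shell decomposition is what captures the fact that at each of the $\sim n$ dyadic scales $2^{-m}$ around $z$, roughly $4^{n-m}$ chosen eigenvalues lie within distance $\sqrt{2}\,2^{-m}$ of $z$; realising this cumulative effect requires managing the $\epsilon$-density defect ring by ring rather than only globally.
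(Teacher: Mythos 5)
Your argument is correct, and it reaches the same quantitative conclusion as the paper ($\sum_{k\in E_n}\abs{z-\lambda_{i(n,k)}}^{-2}\gtrsim n\,4^n$, then $\beta_n\le 2^{2n}$ and divergence of $\sum_n (n-N)/n^2$), but it is organized around a genuinely different decomposition. The paper works at the single scale $2^{-n}$: it forms the growing squares $L_{n,p}(z)$ made of the $(2p-1)^2$ stage-$n$ squares at distance at most $p$ from $z$, shows by the regular-shrinking density theorem that at least $\tfrac34$ of them meet $A_1$, and then extracts, by induction on $p$ and a cardinality count ($\tfrac34(2p-1)^2-\tfrac{p(p-1)}{2}\ge p$), pairwise disjoint index sets $I'_{n,p}$ of size $p$ with $\abs{z-\lambda_i}<p\sqrt2\,2^{-n}$, so that $\sum_p p\cdot\frac{2^{2n}}{2p^2}$ gives the logarithmic factor; it must also restrict to $p\le 2^{n-\alpha}$ and impose $\sqrt\varepsilon/2\le\inf_n p_n(z)2^{-n}$ to guarantee the $L_{n,p}(z)$ are untruncated squares when $z$ is near $\Frontiere([0,1]^2)$. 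You instead decompose across dyadic scales: the rings $D_{m,k_m}\setminus D_{m+1,k_{m+1}}$, $N\le m<n$, around the nested dyadic squares containing $z$ (well defined because $z\in A_1$ avoids $\cup_n F_n$), each containing $3\cdot4^{n-m-1}$ stage-$n$ subsquares of which at most $\epsilon 4^{n-m}$ can miss $A_1$ (here you correctly use that $A_1$ avoids the grid, so a subsquare whose interior misses $A_1$ carries no $A_1$-measure), leaving $\ge\tfrac12 4^{n-m}$ indices in $E_n$ each contributing $\ge 4^m/2$. This buys two simplifications over the paper: the rings are pairwise disjoint, so distinctness of the counted indices is automatic and no inductive selection of the sets $I'_{n,p}$ is needed; and since the dyadic squares containing $z$ are always full squares, the paper's boundary-proximity discussion (the quantities $p_n(z)$, $\alpha$, and the constraint linking $\varepsilon$ to $\dist(z,\Frontiere([0,1]^2))$) disappears, the only threshold being your $N(\epsilon)$ from the density theorem applied along the regularly shrinking squares $D_{m,k_m}$. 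Both proofs rest on the same inputs — the regular-shrinking form of the Lebesgue density theorem, the characterization $u\in\Range(D-z)\iff\sum_i\abs{\ps{u}{e_i}}^2/\abs{z-\lambda_i}^2<\infty$ for $z\notin\Specp{D}$, and $\beta_n\le 2^{2n}$ — so yours is a cleaner bookkeeping of the same multi-scale phenomenon rather than a new idea, but it is a legitimate and somewhat tidier alternative to the paper's proof.
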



\begin{proof}
  Let $z \in A_1 \setminus \Specp{D} $. For all $n \in \N$, there exists some unique integers $l_n(z)$ and $ m_n(z) \in \lbrace 0, \dots , 2^n \rbrace$, such that 
  $$
  z \in ]l_n(z) 2^{-n},(l_n(z)+1) 2^{-n}[\times ]m_n(z) 2^{-n},(m_n(z)+1) 2^{-n}[. 
  $$
   For all $p \in \lbrace 1, \dots, 2^n \rbrace $, set
 \begin{align*}
 L_{n,p}(z) 
 =& \quad \quad \quad \bigcup & [l 2^{-n},(l+1) 2^{-n}]\times[m 2^{-n},(m+1) 2^{-n}]. \\
  & m,l \in \lbrace 0, \dots , 2^{2n} \rbrace & \\
  & \abs{m - m_n(z)} \le p & \\
  & \abs{l - l_n(z)} \le p &
  \end{align*}
 
 The set $L_{n,p}(z) $ is the union of the closed squares $C_{n,k}$ which are at most $p$ squares away from $z$.
\begin{fact}   
    The family $(L_{n,p}(z))_{n \in \N, p \in \lbrace 1, \dots, 2^n \rbrace} $ shrinks regularly to $z$. 
\end{fact}

\begin{proof}[Proof of Fact 1]
  Indeed, if $\Reel{z} < 2^{-1}$ and $\Ima{z} < 2^{-1} $ (the cases $\Reel{z} > 2^{-1}$ and $\Ima{z} < 2^{-1} $, $\Reel{z} < 2^{-1}$ and $\Ima{z} > 2^{-1} $, $\Reel{z} > 2^{-1}$ and $\Ima{z} > 2^{-1} $ are similar),  $ L_{n,p}(z) $ contains the following smaller square
   $$
   P_{n,p} = [l_n(z) 2^{-n},(l_n(z)+p) 2^{-n}]\times[m_n(z) 2^{-n},(m_n(z)+p) 2^{-n}].
   $$
We have that $\m(P_{n,p}) = \frac{(p+1)^2}{2^{2n}} $. Moreover $ L_{n,p}(z) $ is a subset of the following bigger square
   $$
   S_{n,p} = [(l_n(z)-p) 2^{-n},(l_n(z)+p) 2^{-n}]\times[(m_n(z)-p) 2^{-n},(m_n(z)+p) 2^{-n}].
   $$   
   As $S_{n,p}$ is a square, there exists a ball $B_{n,p}$ such that $S_{n,p} \subset B_{n,p} $ and 
  $$
  \m(B_{n,p}) = \frac{\pi}{2} \m(S_{n,p}) = \frac{\pi}{2} \frac{(2p+1)^2}{2^{2n}} \le \frac{\pi}{2} 4\frac{(p+1)^2}{2^{2n}} = 2 \pi \m(P_{n,p}).
  $$
  In other words we have that $L_{n,p}(z) \subset B_{n,p} $ and 
  $$
  \frac{1}{2\pi}\m(B_{n,p}) \le \m(P_{n,p}) \le \m(L_{n,p}(z)).
  $$
  So $L_{n,p}(z) $ shrinks regularly to $z$.
\end{proof}    


  As $\dens(z,\Spec{D})=1 $, it follows 
that there exists $\varepsilon >0$ such that for all $L_{n,p}(z)$ such that $\m(L_{n,p}(z))< \varepsilon$ we have that
\begin{equation}
  \label{EqBcpDePts}
  \frac{\m(\Spec{D} \cap L_{n,p}(z))}{\m(L_{n,p}(z))} > \frac{3}{4}.
\end{equation}

Let $n \in \N$. If $ m_n(z)-p+1  \ge 0$, $ l_n(z)-p +1 \ge 0$, $ m_n(z) +p  \le 2^n$ and $ l_n(z) +p  \le 2^n$, then $L_{n,p}(z)$ is
$$
  [(l_n(z)-p+1) 2^{-n},(l_n(z)+p) 2^{-n}]\times[(m_n(z)-p+1) 2^{-n},(m_n(z) +p) 2^{-n}]
$$
  is a square. Denote by $G_n(z)$ the set of all integers $p \in \lbrace 1, \dots, 2^n \rbrace$ satisfying this condition. In other words, if we write
  $$
  p_n(z) = \min \lbrace m_n(z)+1, l_n(z) +1, 2^n -m_n(z), 2^n - l_n(z) \rbrace,
  $$
  we have that
  $$
  G_n(z) = \lbrace p \in \N: 1 \le p \le p_n(z) \rbrace.
  $$

  Let $\alpha \in \N $ be the smallest integer such that $\varepsilon \ge 4 \, 2^{-2\alpha}$. Let $p \in \N$ be such that $1 \le p \le 2^{n-\alpha}$. We have that 
$$
  \m(L_{n,p}(z))
    \le (2p-1)^2 2^{-2n}  
    \le  4p^2 2^{-2n}  
    \le  4 \, 2^{2n-2\alpha} 2^{-2n}  
    = 4\, 2^{-2\alpha} 
    \le \varepsilon.
$$
  So if $1 \le p \le 2^{n-\alpha} $, then $L_{n,p}(z)$ satisfies (\ref{EqBcpDePts}).
  
  We have that $p_n(z) \sim 2^n \min \lbrace \Reel{z}, \Ima{z}, 1- \Reel{z}, 1- \Ima{z} \rbrace $. We fix some $\varepsilon > 0 $ such that (\ref{EqBcpDePts}) is satisfied and 
  $$
    \frac{\sqrt{\varepsilon}}{2} \le \inf_{n \in \N} p_n(z) 2^{-n}.
  $$
  Then we have that
  $$
    2^{n-\alpha} \le  \frac{\sqrt{\varepsilon}}{2} 2^n \le \inf_{n \in \N} p_n(z) \le p_n(z).  
  $$
  In other words, if $\varepsilon$ is small enough, the sets $L_{n,p}(z)$ for $1 \le p \le 2^{n-\alpha} $ are squares, composed by $(2p-1)^2 $ squares $C_{n,k}$. From now on, we suppose that this condition is satisfied.
  
  Fix $n \ge 1$. We will prove by induction on $p \in \lbrace 1, \dots, 2^{n-\alpha} \rbrace$ that there exists a subset $I'_{n,p}$ of $I_n$ of cardinal $ p$, such that the $I'_{n,p}$, $p \in \lbrace 1, \dots, 2^{n-\alpha} \rbrace$, are pairwise disjoint, and for all $i \in I'_{n,p}$, $\lambda_i \in L_{n,p}(z)$.
  
Let $ n \ge n_0$ such that $1 \ge 2^{n_0 - \alpha} $. If $p = 1$, then $L_{n,1}(z)$ is the only square $ D_{n,k} $ which contains $z$. As $z \in A_1$ and $ z \in D_{n,k} $ then $ A_1 \cap D_{n,k} \ne \emptyset $. So $k \in E_n$ and $\lambda_{i(n,k)} \in D_{n,k} $, and we set $I'_{n,1} = \lbrace i(n,k) \rbrace $.

Inside $L_{n,p}(z) $, there are $(2p-1)^2$ squares $D_{n,k}$. As (\ref{EqBcpDePts}) is satisfied, we get that
$$
  \m(L_{n,p}(z) \cap \Spec{D}) = \m(L_{n,p}(z) \cap A_1) > \frac{3}{4} \m(L_{n,p}(z)).
$$
There are at least $\frac{3}{4}$ of the squares $D_{n,k}$ forming $L_{n,p}(z) $ which meet $A_1$. Otherwise there would be a least $\frac{1}{4}$ of the $(2p-1)^2$ squares $D_{n,k}$ included in $L_{n,p}(z) $ which does not meet $ A_1$, so we would have
 $$
   \m(L_{n,p}(z) \setminus \Spec{D}) \ge \frac{1}{4} (2p-1)^2 2^{-2n}.
 $$
 Consequently
 \begin{align*}
   \m(L_{n,p}(z) \cap \Spec{D}) 
     &= \m(L_{n,p}(z)) - \m(L_{n,p}(z) \setminus \Spec{D}) \\ 
     &\le (2p-1)^2 2^{-2n} - \frac{1}{4} (2p-1)^2 2^{-2n} \\
     &= \frac{3}{4} (2p-1)^2 2^{-2n} \\
     &= \frac{3}{4} \m(L_{n,p}(z)).  
 \end{align*}
 This contradicts (\ref{EqBcpDePts}). 
 
 In other words, we chose during the construction $u$ at least $\frac{3(2p-1)^2}{4}$ squares at stage $n$ in $ L_{n,p}(z)$. Denote by $J'_{n,p}$ the set of the  corresponding index $i(n,k)$, i.e. $J'_{n,p} = \lbrace i(n,k) : D_{n,k} \cap A_1 \ne \emptyset \textrm{ and } D_{n,k} \subset L_{n,p}(z) \rbrace $. We have that $J'_{n,p} \subset I_n$.  Let $I''_{n,p} = J'_{n,p} \setminus \cup_{l=1}^{p-1} I'_{n,l} $. We have that for all $i \in I''_{n,p}$, $\lambda_i \in L_{n,p}(z)$ so 
 $$
  \abs{z - \lambda_i} < \frac{p\sqrt{2}}{2^n}.   
$$ 
Moreover, since the cardinal of $I''_{n,l}$ is $l$ for each $l \in \lbrace 1, \dots, p-1 \rbrace $
\begin{align*}
  \card(I''_{n,p}) 
    \ge \frac{3}{4}(2p-1)^2 - \sum_{l=1}^{p-1} l 
    \ge \frac{3}{4}(2p-1)^2 - \frac{p(p-1)}{2} 
    \ge p.
\end{align*}
  So we can choose for $I'_{n,p}$ any subset of $I''_{n,p}$ of cardinal $p$. 
  
  Then the $I'_{n,p}$ are pairwise disjoint, and contained in $I_n$. We have that
\begin{align*}
  \sum_{k \in E_n} \frac{1}{\abs{z- \lambda_{i(n,k)}}^2}
    &= \sum_{i \in I_n} \frac{1}{\abs{z- \lambda_{i}}^2} 
    \ge \sum_{p=1}^{2^{n-\alpha}} \sum_{i \in I'_{n,p}} \frac{1}{\abs{z- \lambda_{i}}^2} 
    \ge \sum_{p=1}^{2^{n-\alpha}} \frac{2^{2n}}{2p^2}p \\ 
    &=   \frac{2^{2n}}{2} \sum_{p=1}^{2^{n-\alpha}} \frac{1}{p} 
    \ge  \frac{2^{2n}}{2} \log(2^{n-\alpha}) 
    =    \frac{2^{2n}}{2} (n-\alpha) \log(2).
\end{align*}
  So
\begin{align*}
  \norme{(D-z)^{-1}u_1}^2 
    &\ge \sum_{n \in \N} \sum_{k\in E_n} \frac{1}{n^2\beta_n} \frac{1}{\abs{z-\lambda_{i(n,k)}}^2} \\ 
    &=   \sum_{n \in \N}  \frac{1}{n^2\beta_n} \sum_{k\in E_n} \frac{1}{\abs{z-\lambda_{i(n,k)}}^2} \\ 
    &\ge \sum_{n \in \N}  \frac{1}{n^2\beta_n} \frac{2^{2n}}{2} (n-\alpha) \log(2) \\
    &=   \frac{\log(2)}{2} \sum_{n \in \N}  (n-\alpha) \frac{2^{2n}}{n^2\beta_n} \\
    &\ge  \frac{\log(2)}{2} \sum_{n \in \N} \frac{n-\alpha}{n^2} \\
    &=  \infty.
\end{align*}
  We have used here the fact that there are at most $2^{2n}$ dyadic squares $C_{n,k}$ at stage $n$, and consequently $\beta_n$ cannot exceed $2^{2n} $. So $\frac{2^{2n}}{\beta_n} \ge 1 $. This proves that if $z \in A_1 \setminus \Specp{D}$ then $u_1 \notin \Range(D-z)$.
\end{proof}

\begin{lemma}
  Let $z \in A_2 \setminus \Specp{D} $. Then $u_2 \notin \Range(D-z) $.
\end{lemma}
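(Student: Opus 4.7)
The plan is to translate the statement $u_2\notin\Range(D-z)$ into the divergence of an explicit series in the basis $(e_i)_{i\in\N}$, and then read off divergence directly from the Rogers covering used to construct $u_2$. Concretely, since $D-z$ acts diagonally with eigenvalues $\lambda_i-z$, a vector $w=\sum_i w_i e_i$ lies in $\Range(D-z)$ iff $z\notin\Specp{D}$ and $\sum_i |w_i|^2/|\lambda_i-z|^2<\infty$. Applied to $u_2=\sum_n \diam(O_n)\,e_{j(n)}$, what I need to prove is
$$
  \sum_{n\in\N}\frac{\diam(O_n)^2}{|z-\lambda_{j(n)}|^2}=\infty
$$
whenever $z\in A_2\setminus\Specp{D}$ (the hypothesis $z\notin\Specp{D}$ ensures no denominator vanishes, since every $\lambda_{j(n)}$ is an eigenvalue of $D$).

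The second step is the use of Theorem \ref{ThEnsMesNulleRogers}. By the construction of the sequence $(O_n)_{n\in\N}$ recalled just before the definition of $u_2$, the balls $O_n$ cover $A_2$ with $\sum_n\diam(O_n)^2<\infty$ and, crucially, every point of $A_2$ lies in infinitely many of the $O_n$. In particular our $z\in A_2$ belongs to $O_n$ for infinitely many indices $n$. For each such $n$ we also have $\lambda_{j(n)}\in O_n$ by the very choice of $j(n)$ (namely $\lambda_{j(n)}\in\Spec{D}\cap O_n$). Since $O_n$ is a ball, any two points of $O_n$ are within distance $\diam(O_n)$, so
$$
  |z-\lambda_{j(n)}|\le\diam(O_n),\qquad\text{hence}\qquad \frac{\diam(O_n)^2}{|z-\lambda_{j(n)}|^2}\ge 1
$$
for infinitely many $n$. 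The series therefore diverges, which gives $u_2\notin\Range(D-z)$.

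There is essentially no obstacle here: the whole difficulty was front-loaded into choosing $u_2$ in such a way that the Rogers covering of the measure-zero set $A_2$ controls its coefficients. The only point demanding a word of care is the distinction between $z=\lambda_{j(n)}$ (excluded because $z\notin\Specp{D}$) and the bound $|z-\lambda_{j(n)}|\le\diam(O_n)$; both follow immediately from the construction. Combined with the previous lemma for $A_1$, this finishes the proof of Proposition \ref{PropUPasDansImDMoinsZ}, since $\Spec{D}\setminus\Specp{D}=(A_1\setminus\Specp{D})\cup(A_2\setminus\Specp{D})$ and on each piece one of $u_1,u_2$ fails to lie in $\Range(D-z)$, hence so does $u=u_1+u_2+u_r$.
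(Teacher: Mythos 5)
Your argument is correct and is essentially the paper's own proof: the paper defines $J_z=\{i:z\in O_i\}$, observes $|z-\lambda_{j(i)}|\le\diam(O_i)$ for $i\in J_z$ since both points lie in the ball $O_i$, and concludes $\|(D-z)^{-1}u_2\|^2\ge\sum_{i\in J_z}1=\infty$ because $J_z$ is infinite. The only presentational difference is that you spell out the range characterization and the $z\notin\Specp{D}$ side remark a bit more explicitly.
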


\begin{proof}
  Let $z \in A_2 \setminus \Specp{D}$. Set $J_z = \lbrace i \in \N : z \in O_i \rbrace$. Then we have that
\begin{align*}
  \norme{(D-z)^{-1}u_2}^2
    &=   \sum_{n \in \N} \frac{\abs{\ps{u}{e_{j(n)}}}^2}{\abs{z-\lambda_{j(n)}}^2} 
    \ge \sum_{i \in J_z} \frac{\abs{\ps{u}{e_{j(i)}}}^2}{\abs{z-\lambda_{j(i)}}^2} 
    =   \sum_{i \in J_z} \frac{\diam(O_i)^2}{\abs{z-\lambda_{j(i)}}^2} \\ 
    &\ge \sum_{i \in J_z} \frac{\diam(O_i)^2}{\diam(O_i)^2} 
    =   \sum_{i \in J_z} 1 
    =   \infty,
\end{align*}
because there are infinitely many $i$ such that $z \in O_i $. This prove that if $z \in A_2 \setminus \Specp{D} $, then $u_2 \notin \Range(D-z) $.
\end{proof}

We conclude that if $z \in \Spec{D}\setminus \Specp{D}$, then $u \notin \Range(D-z)  $. This finishes the proof of Proposition \ref{PropUPasDansImDMoinsZ}.

\section{An analytic function which does not vanish outside a perfect set}

  Thanks to the previous work, we can make condition (2) of Proposition \ref{PropIonascuCnsRangUnVp} impossible to be satisfied for any $z \in \Spec{D} \setminus \Specp{D}$. In other words, we can build a rank one perturbation of $D$ without eigenvalues inside $\Spec{D}$. In this section we present a tool that will allow us to construct a rank one perturbation of $D$ without any eigenvalue outside  $\Spec{D}$. In order to do so, we need to make sure that condition (3) of Proposition \ref{PropIonascuCnsRangUnVp} is satisfied for any $z \in \C \setminus \Spec{D}$. The results of this section are due to William Alexandre \cite{William_Alexandre}, who kindly allowed the author to reproduce it here.
  
\begin{proof}[Proof of Proposition \ref{PropFctionHoloQuiSannulePas}]
  We want to construct a function $f$ analytic on $\C \setminus F$ of the form
  $$
    f(z) = \sum_{i=1}^\infty \frac{c_i}{z-\lambda_i} - 1
  $$
  which does not vanish on $\C \setminus F$. If we choose $f$ of the form 
  $$
  f(z)=  \prod_{i=1}^\infty \frac{z- \mu_i}{z - \lambda_i},
  $$
  with $\mu_i \in F \setminus \lbrace \lambda_i: i \in \N \rbrace$ well chosen, such that the infinite product converges uniformly on every compact subset of $\C \setminus F$, this will allow us to prove that $f$ does not vanish on $ \C \setminus F$.
  The $\mu_i$ will be constructed by induction. At each stage $N$, we will consider the partial product
  $$
  f_N(z) = \prod_{i=1}^N \frac{z- \mu_i}{z - \lambda_i},
  $$
  and prove that $f_N$ can be written as
  $$
  \sum_{i=1}^N \frac{c_{i,N}}{z-\lambda_i} - 1.
  $$
  With a suitable choice of $\mu_i$, we will prove that we can control the $c_{i,N}$. These will converge to some $c_{i,\infty} = c_i$ as $N$ tends to infinity, and thus will give a natural candidate for $f$ of the form
  $$
  f(z) = \sum_{i=1}^\infty \frac{c_i}{z-\lambda_i} - 1.
  $$
  We will check then that $f(z) \ne 0$ for every $z \in \C \setminus F$. This will be a consequence of the fact that $f_N$ will converge to $f$ uniformly on every compact subset of $\C \setminus F$. 

  Let $c_{1,N}, \dots, c_{N,N} \in \C $ be some complex numbers. We have that 
$$
  \sum_{i=1}^N \frac{c_{i,N}}{z-\lambda_i} - 1 = \frac{\sum_{i=1}^N c_{i,N} \prod_{j=1,\, j \ne i}^N (z - \lambda_j) - \prod_{i=1}^N (z- \lambda_i) }{\prod_{i=1}^N (z- \lambda_i) }.
$$
If we want that
$$
  \sum_{i=1}^N \frac{c_{i,N}}{z-\lambda_i} - 1 = \prod_{i=1}^N \frac{z- \mu_i}{z - \lambda_i},
$$ 
we must have that
$$
  \sum_{i=1}^N c_{i,N} \prod_{j=1,\, j \ne i}^N (z - \lambda_j) - \prod_{i=1}^N (z- \lambda_i)  = \prod_{i=1}^N (z- \mu_i).
$$
Evaluated at point $z= \lambda_k$, this last inequality can be rewritten as
$$
  c_{k,N}\prod_{j=1, \, j \ne k}^N (\lambda_k - \lambda_j)= \prod_{i=1}^N (\lambda_k - \mu_i).
$$ 
If we denote
$$
  c_{k,N}= (\lambda_k - \mu_k) \prod_{j=1, \, j \ne k}^N \frac{\lambda_k-\mu_j}{\lambda_k - \lambda_j},
$$
we have that
$$
  f_N(z) = \prod_{i=1}^N \frac{z- \mu_i}{z - \lambda_i} =  \sum_{i=1}^N \frac{c_{i,N}}{z-\lambda_i} - 1.
$$
Let $(\epsilon_i)_{i \in \N}$ be a sequence of positive real numbers such that
$$
  \prod_{i=1}^\infty (1+\epsilon_i) < \infty.
$$
Now, as $F$ is a perfect set, we can choose by induction the $\mu_k \in F \setminus \lbrace \lambda_i: i \in \N \rbrace$, such that for every $j<k$ we have that
$$
  \frac{\abs{\lambda_k - \mu_k}}{\abs{\lambda_j-\mu_k}} < \epsilon_k,
\quad \textrm{and} \quad
  \abs{(\lambda_k - \mu_k)\prod_{i=1}^{k+1} \frac{\lambda_k - \mu_i}{\lambda_k - \lambda_j}} < \frac{\gamma_k}{\prod_{i=k+1}^\infty (1+\epsilon_i)}.
$$
Denote by $c_{k,N}$ the coefficient associated to those $\mu_k$. Namely we set
$$
  c_{k,N}= (\lambda_k - \mu_k) \prod_{j=1, \, j \ne k}^N \frac{\lambda_k-\mu_j}{\lambda_k - \lambda_j}.
$$
We then denote by
$$
  c_k = c_{k,\infty} = \prod_{j=1}^{k-1} \frac{\lambda_k-\mu_j}{\lambda_k - \lambda_j}(\lambda_k - \mu_k) \prod_{j=k}^\infty \frac{\lambda_k-\mu_j}{\lambda_k - \lambda_j}.
$$
We have that
\begin{align*}
  \abs{c_k}
    &= \abs{\prod_{j=1}^{k-1} \frac{\lambda_k-\mu_j}{\lambda_k - \lambda_j}(\lambda_k - \mu_k)} \abs{\prod_{j=k+1}^\infty \frac{\lambda_k-\mu_j}{\lambda_k - \lambda_j}} \\
    &= \abs{\prod_{j=1}^{k-1} \frac{\lambda_k-\mu_j}{\lambda_k - \lambda_j}(\lambda_k - \mu_k)} \abs{\prod_{j=k+1}^\infty\left( 1 + \frac{\lambda_j-\mu_j}{\lambda_k - \lambda_j}\right) }  \\
    &\le \frac{\gamma_k}{\prod_{i=k+1}^\infty (1+\epsilon_i)} \prod_{i=k+1}^\infty (1+\epsilon_i) \\
    &= \gamma_k.
\end{align*}
  As $(\gamma_k)_{k \in \N} \in \lun$, we have that $(c_k)_{k \in \N} \in \lun$. As $c_k$ can be written as a convergent product of complex numbers, i.e.
  $$  
   c_k = K_k \prod_{j=k+1}^\infty (1 + \alpha_j)
  $$
with
$$
  K_k = \prod_{j=1}^{k-1} \frac{\lambda_k-\mu_j}{\lambda_k - \lambda_j}(\lambda_k - \mu_k),
$$ 
and
$$
  \alpha_j = \frac{\lambda_j-\mu_j}{\lambda_k - \lambda_j},
$$
we have that $c_k \ne 0$. Remark that $c_k = \lim_{N \rightarrow \infty} c_{k,N} $. Let $L \subset \C \setminus F$ be a compact set. We have that
\begin{align*}
  \sup_{z \in L} \abs{f(z) - f_N(z)}
    &\le \sup_{z \in L} \abs{ \sum_{i=1}^N \frac{c_i - c_{i,N}}{z-\lambda_i} }
          + \sup_{z \in L} \abs{ \sum_{i=N+1}^\infty \frac{c_i}{z-\lambda_i} } \\
    &\le \frac{1}{\dist(L,F)}\left( \sum_{i=1}^N \abs{c_i - c_{i,N}} + \sum_{i=N+1}^\infty \abs{c_i} \right).
\end{align*}
As $(c_i)_{i \in \N} \in \lun $, we have that 
$$
  \lim_{N \rightarrow \infty} \sum_{i=N+1}^\infty \abs{c_i}  = 0.
$$
We have that
\begin{align*}
  \sum_{i=1}^N &\abs{c_i - c_{i,N}} \\
  &= \sum_{i=1}^N \abs{(\lambda_i - \mu_i) \prod_{j=1, \, j \ne i}^N \frac{\lambda_i-\mu_j}{\lambda_i - \lambda_j}} \abs{\prod_{j=N+1}^\infty\left( 1 + \frac{\lambda_j-\mu_j}{\lambda_i - \lambda_j}\right) - 1 } \\
  &= \sum_{i=1}^N \abs{\prod_{j=1}^{i-1} \frac{\lambda_i-\mu_j}{\lambda_i - \lambda_j}(\lambda_i - \mu_i)} \abs{\prod_{j=i+1}^N \frac{\lambda_i-\mu_j}{\lambda_i - \lambda_j}} \abs{\prod_{j=N+1}^\infty\left( 1 + \frac{\lambda_j-\mu_j}{\lambda_i - \lambda_j}\right) - 1 } \\
  &\le \sum_{i=1}^N \frac{\gamma_i}{\prod_{j=i+1}^\infty (1+\epsilon_j)} \prod_{j=i+1}^N (1+\epsilon_j) \abs{\prod_{j=N+1}^\infty(1+\epsilon_j) -1} \\
  &= \frac{\abs{\prod_{j=N+1}^\infty (1+\epsilon_j) - 1 }}{\prod_{j=N+1}^\infty (1+\epsilon_j)} \sum_{i =1}^N \gamma_i.
\end{align*}
  As $\sum_{i =1}^N \gamma_i \le \sum_{i =1}^\infty \gamma_i < \infty $ and as 
$$
  \lim_{N \rightarrow \infty} \frac{\abs{\prod_{j=N+1}^\infty (1+\epsilon_j) - 1 }}{\prod_{j=N+1}^\infty (1+\epsilon_j)} = 0,
$$
we obtain that $f_N$ converges to $f$  uniformly on every compact subset of $\C \setminus F$. As $f_N$ does not vanish on $\C \setminus F$, $f$ doesn't either.
\end{proof}

\section{Proof of the main Theorem}

We are now ready to prove Theorem \ref{ThPertRangUnOpDiag}.

\begin{proof}[Proof of Theorem \ref{ThPertRangUnOpDiag}]
  According to Proposition \ref{PropUPasDansImDMoinsZ}, there exists $u \in H$ such that for every $i \in \N$, $\ps{u}{e_i} \ne 0$ and for all $z \in \Spec{D} \setminus \Specp{D} $ we have that
  $$
    \sum_{i \in \N} \frac{\abs{\ps{u}{e_i}}^2}{\abs{z-\lambda_i}^2} = \infty.
  $$
  Let $\delta >0$. From Proposition \ref{PropFctionHoloQuiSannulePas}, is follows that there exist complex numbers $c_i$ such that for every $i \in \N$ we have
  $$
    0 < \abs{c_i} \le \delta \abs{\ps{u}{e_i}}^2,
  $$
  and for all $z \in \C \setminus \Spec{D}$, we have that
  $$
    \sum_{i \in \N} \frac{c_i}{z-\lambda_i} \ne 1.
  $$
  Let $v$ be a vector such that for every $i \in \N$
  $$
  \ps{v}{e_i} = \frac{\conj{c_i}}{\conj{\ps{u}{e_i}}}.
  $$  
  Then $v \in H$, because 
  $$
    \norme{v}^2 
      =   \sum_{i \in \N} \frac{\abs{c_i}^2}{\abs{\ps{u}{e_i}}^2} 
      \le \delta^2 \sum_{i \in \N} \abs{\ps{u}{e_i}}^2 
      = \delta^2 \norme{u}^2.
  $$
  This yields that for every $z \in \C \setminus \Spec{D}$, 
  $$
    \sum_{i \in \N} \frac{\ps{u}{e_i}\conj{\ps{v}{e_i}}}{z-\lambda_i} \ne 1.  
  $$
  From Proposition \ref{PropIonascuCnsRangUnVp}, we have that $\Specp{D + \tens{u}{v}} = \emptyset$. Indeed if $z \in \Specp{D}$ then $z \notin \Specp{D +\tens{u}{v}} $ because of condition (1). If $z \in \Spec{D} \setminus \Specp{D} $, then $ \sum_{i \in \N} \frac{\abs{\ps{u}{e_i}}^2}{\abs{z-\lambda_i}^2} = \infty $ , so $z \notin \Specp{D + \tens{u}{v}}$ because of condition (2). If $z \in \C \setminus \Spec{D}$, then $ \sum_{i \in \N} \frac{\ps{u}{e_i}\conj{\ps{v}{e_i}}}{z-\lambda_i} \ne 1 $ and $z \notin \Specp{D + \tens{u}{v}} $ because of condition (3). Moreover, we have that 
  $$
  \norme{\tens{u}{v}} \le \norme{u}\norme{v} \le \delta \norme{u}^2 .
  $$
  By choosing $\delta$ arbitrarily small, we can ensure that $\norme{\tens{u}{v}}$ is arbitrarily small.
  
  We still have to check that $\Spec{D} = \Spec{D + \tens{u}{v}}$. We have that $\Spece{D} = \Spec{D} $ because $\Spec{D}$ has no isolated points. So we get that $\Spec{D} = \Spece{D} \subset \Spec{D + \tens{u}{v}} $. If $z \in \Spec{D + \tens{u}{v} } \setminus \Spec{D}$, according to Weyl's Theorem, $z$ belongs to $\Specp{D+ \tens{u}{v}} $. But $D+ \tens{u}{v}$ has no eigenvalues. Therefore $\Spec{D} = \Spec{D + \tens{u}{v}}$.
\end{proof}

\begin{remark}
  Suppose that $D$ satisfies the hypothesis of Theorem \ref{ThPertRangUnOpDiag}. Moreover suppose that $\Spec{D}$ is connected and has a non empty interior. Then we don't know if the operator $T = D + \tens{u}{v}$ built before posses a non trivial invariant subspace. Indeed we can't use the fact that $T$ posses an eigenvalue, because $T$ was build without eigenvalue. We can't use Riesz-Dunfod functional calculus because $\Spec{T}= \Spec{D}$ is connected. As $u \notin \Range{(D-z)}$ for all $z \in \Spec{D}$, we can't use the techniques of \cite{Foias_Jung_Ko_Pearcy_2007}, \cite{Fang_Xia_2012} nor \cite{Klaja_2013}. In this case we can also prove that $u \in \lp{1+\varepsilon}(\lbrace e_k \rbrace) $ for every $\varepsilon > 0$ and $u \notin \lp{1}(\lbrace e_k \rbrace)$. Therefore the $\lp{1}$ condition of \cite{Fang_Xia_2012} for rank one perturbation of diagonal operators is sharp in some sense, and there is a few hope that the previous technique can be used for getting a complete solution the existence of non trivial hyperinvariant subspace of rank one perturbation of diagonal operators. 
  
  However we don't know if $T^*$ possess or not an eigenvalue.
\end{remark}

\section{The unbounded case}

In this section we will generalize Theorem \ref{ThPertRangUnOpDiag} to unbounded diagonal operators.

\begin{theorem}
  \label{ThPertRangUnOpDiagNonBorne}
  Let  $D = \sum_{i \in \N} \lambda_i \tens{e_i}{e_i} $ be a diagonal operator (possibly unbounded). Suppose that for every $i \ne j $, $\lambda_i \ne \lambda_j $. Moreover suppose that $\Spec{D} =  \adherence{\lbrace \lambda_i: i \in \N \rbrace} $ is a perfect set. Then there exist $u,v \in H$ such that $\Specp{D + \tens{u}{v}} = \emptyset$.  

Moreover we can choose $u,v \in H$ such that $\norme{\tens{u}{v}}$ is arbitrarily small.  
\end{theorem}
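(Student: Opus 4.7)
The plan is to follow Section 6 verbatim, once the two auxiliary propositions it relies on are extended to the unbounded setting. The analytic function of Proposition \ref{PropFctionHoloQuiSannulePas} causes no difficulty: the statement and its proof in Section 5 only require $F\subset\C$ to be closed and perfect, and apply verbatim to $F=\Spec{D}$, yielding coefficients $(c_i)$ with any prescribed $\ell^1$ bounds. The characterisation of eigenvalues given by Proposition \ref{PropIonascuCnsRangUnVp} also extends to unbounded $D$: any eigenvector $x=\sum_i x_i e_i$ of $D+\tens{u}{v}$ at $z\notin\{\lambda_i\}$ satisfies $x_i=\ps{x}{v}\ps{u}{e_i}/(z-\lambda_i)$ and must lie in $\Dom(D)$, which only strengthens condition (2) into $\sum_i(1+|\lambda_i|^2)|\ps{u}{e_i}|^2/|z-\lambda_i|^2<\infty$. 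In particular, forcing $\sum_i|\ps{u}{e_i}|^2/|z-\lambda_i|^2=\infty$ for every $z\in\Spec{D}\setminus\Specp{D}$ still kills those potential eigenvalues, and since $(D-z)^{-1}$ remains bounded and maps $H$ into $\Dom(D)$ for $z\in\C\setminus\Spec{D}$, breaking condition (3) is still the only task outside $\Spec{D}$.

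The real work is to adapt Proposition \ref{PropUPasDansImDMoinsZ} to an unbounded perfect $\Spec{D}$. I would redo the construction of Section 4 on the dyadic grid of the whole plane, but retain at level $n$ only the dyadic squares $C_{n,k}$ contained in $[-n,n]^2$. This gives at most $\beta_n\le 4n^22^{2n}$ chosen squares at scale $n$, keeps the weighting $\frac{1}{n\sqrt{\beta_n}}$ meaningful, and yields $\norme{u_1}^2=\sum_n n^{-2}<\infty$ as before. The sets $A_1=\{z\in\Spec{D}:\dens(z,\Spec{D})=1\}\setminus\bigcup_n F_n$ and $A_2=\Spec{D}\setminus A_1$ still satisfy $\m(A_2)=0$ (Lebesgue density applied on each compact, together with the fact that a countable union of null sets is null). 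Theorem \ref{ThEnsMesNulleRogers} then directly produces a family $(O_i)$ of balls covering $A_2$ with $\sum_i\diam(O_i)^2<\infty$ and every point of $A_2$ in infinitely many $O_i$, so the definitions of $u_1$, $u_2$, $u_r$ copy Section 4 word for word. The two main lemmas also transpose: for a fixed $z\in A_1\setminus\Specp{D}$ the cutoff to $[-n,n]^2$ is harmless once $n\ge n_0(z)$ (only finitely many initial terms are discarded, since each $L_{n,p}(z)$ with $p\le 2^{n-\alpha}$ sits in a fixed-radius ball around $z$), so the divergence $\norme{(D-z)^{-1}u_1}^2\ge\frac{\log 2}{2}\sum_{n\ge n_0(z)}(n-\alpha)/n^2=\infty$ is preserved, while the $u_2$ argument on $A_2\setminus\Specp{D}$ is purely local and carries over unchanged.

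Once $u$ is in hand, the end of the proof is the verbatim translation of Section 6: apply Proposition \ref{PropFctionHoloQuiSannulePas} with $\gamma_i=\delta|\ps{u}{e_i}|^2$ to obtain coefficients $c_i$ with $\sum_i c_i/(z-\lambda_i)\ne 1$ on $\C\setminus\Spec{D}$, define $v\in H$ by $\ps{v}{e_i}=\conj{c_i}/\conj{\ps{u}{e_i}}$ with $\norme{v}\le\delta\norme{u}$, and conclude via the extended Proposition \ref{PropIonascuCnsRangUnVp} that every $z\in\C$ fails at least one of conditions (1)--(3), so $\Specp{D+\tens{u}{v}}=\emptyset$ and $\norme{\tens{u}{v}}\le\delta\norme{u}^2$ can be made arbitrarily small. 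I expect the main obstacle to be the careful bookkeeping needed in the adaptation of Proposition \ref{PropUPasDansImDMoinsZ}: simultaneously controlling the $\ell^2$ norm of $u$ across infinitely many scales with (potentially) infinitely many retained squares, and ensuring that the growing cutoff $[-n,n]^2$ does not damage the divergence estimate at any fixed $z\in A_1\setminus\Specp{D}$.
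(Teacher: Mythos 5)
There is a genuine gap, and it sits exactly where you predicted the ``careful bookkeeping'' would be needed: the global-grid adaptation of Proposition \ref{PropUPasDansImDMoinsZ} breaks the divergence estimate. In the bounded case the final inequality of the $A_1$-lemma reads
$$
  \norme{(D-z)^{-1}u_1}^2 \ge \frac{\log 2}{2}\sum_{n} (n-\alpha)\,\frac{2^{2n}}{n^2\beta_n},
$$
and it produces $\infty$ only because $\beta_n\le 2^{2n}$, i.e.\ $2^{2n}/\beta_n\ge 1$: the number of squares retained at stage $n$ is at most the number of dyadic squares of side $2^{-n}$ in the \emph{unit} square. On your grid, where stage $n$ ranges over all dyadic squares of side $2^{-n}$ contained in $[-n,n]^2$, you yourself note $\beta_n$ can be as large as $4n^2 2^{2n}$ (e.g.\ if the $\lambda_i$ are dense in $\C$, every square meets $A_1$ and $\beta_n=4n^2 2^{2n}$ exactly). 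Then $2^{2n}/\beta_n\ge 1/(4n^2)$ is all you get, the lower bound becomes $\asymp\sum_n (n-\alpha)/n^4<\infty$, and in the dense-in-$\C$ example one can check the full sum $\sum_{k\in E_n}\abs{z-\lambda_{i(n,k)}}^{-2}$ is of order $n\,2^{2n}$, so with the weight $1/(n\sqrt{\beta_n})$ the quantity $\sum_i \abs{\ps{u_1}{e_i}}^2/\abs{z-\lambda_i}^2$ is genuinely finite. The problem is structural: the weight $1/(n\sqrt{\beta_n})$ is diluted by the \emph{global} count of squares at stage $n$, while the divergence gain near a fixed $z$ is purely \emph{local} (only the squares in $L_{n,p}(z)$, $p\le 2^{n-\alpha}$, of which there are $O(2^{2n})$, contribute), and no choice of a stage-dependent weight $a_n$ can give both $\sum_n \beta_n a_n^2<\infty$ and $\sum_n a_n^2\, 2^{2n}(n-\alpha)=\infty$ when $\beta_n\asymp n^2 2^{2n}$.

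The paper circumvents this by never running the dyadic construction on an unbounded region: it cuts the plane into unit cells $\mathcal{C}_{n,k}=]n,n+1]\times]k,k+1]$, so that $D$ becomes a block-diagonal sum of \emph{bounded} diagonal operators $D_{|\tilde H_{n,k}}$, applies the bounded-case Proposition \ref{PropUPasDansImDMoinsZ} in each block (with its own, local weighting), and glues the resulting vectors $u_{n,k}$ with small scalars $\alpha_{n,k}$; the point is that $\alpha_{n,k}u_{n,k}\notin \Range(D_{|\tilde H_{n,k}}-z)$ for a single block already forces $u\notin\Range(D-z)$, so no global normalization can spoil the divergence. Two further items the paper has to handle and your sketch would also need: cutting into cells can create isolated points of $\Spec{D_{|H_{n,k}}}$ on cell boundaries, so eigenvectors are reassigned to neighbouring cells to keep each block spectrum perfect and compact; and for $z\in\Spec{D}\setminus\Specp{D}$ one must argue (pigeonhole on a converging subsequence $\lambda_i\to z$) that $z$ lies in the spectrum of at least one block where the obstruction applies. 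Your remarks about Proposition \ref{PropFctionHoloQuiSannulePas} and the extension of Proposition \ref{PropIonascuCnsRangUnVp} (the domain condition only strengthens condition (2)) are fine; the repair of the $u_1$-construction essentially forces you back to the paper's block decomposition, or at least to weights chosen per unit cell rather than per dyadic stage.
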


\begin{proof}
  In the previous proof, only Proposition \ref{PropUPasDansImDMoinsZ} does not work in the unbounded case. Recall that in the proof of Proposition \ref{PropUPasDansImDMoinsZ}, we reduced to the case $\Spec{D} \subset [0,1]\times [0,1]$, which is possible only if $ \Spec{D}$ is a bounded set. The strategy of the proof here is to write the diagonal operator $D$ as a direct sum of bounded diagonal operators.
  
  Denote by $\mathcal{C}_{n,k} = ]n,n+1]\times ]k,k+1]$. We have that $\C = \sqcup_{n,k \in \Z} \mathcal{C}_{n,k}$. For every $n,k \in \Z$ denote by $H_{n,k}$ the space
  $$
    H_{n,k} = \Span{e_i : \lambda_i \in \mathcal{C}_{n,k}}.
  $$
  We have that $H_{n,k}$ reduce $D$,
  $$
  D = \bigoplus_{n,k \in \Z} D_{|H_{n,k}}
  \quad \textrm{and} \quad
  H = \bigoplus_{n,k \in \Z} H_{n,k}.
  $$
  We want to apply Proposition \ref{PropUPasDansImDMoinsZ} to $ D_{|H_{n,k}}$, but this is not possible yet.
  
  If $z \in \Spec{ D_{|H_{n,k}}} \cap \Interieur{\mathcal{C}_{n,k}} $, then $z$ cannot be an isolated point in $\Spec{ D_{|H_{n,k}}}$, otherwise it would be also isolated in $ \Spec{D}$, this would contradicts the hypothesis that $\Spec{D}$ is a perfect set.
  
  If $z \in \Spec{ D_{|H_{n,k}}} \cap \mathcal{C}_{n,k} \setminus \Interieur{\mathcal{C}_{n,k}} $ is an isolated point of $\Spec{D_{|H_{n,k}}} $, then $ z \in \Specp{D}$ (otherwise, as $z \in \Spec{D_{|H_{n,k}}}$, $z$ would be the limit of some sequence $\lambda_i \in \mathcal{C}_{n,k} $ and wouldn't be isolated). Therefore there exists $i \in \N$ such that $z = \lambda_i$. As $z\in \Spec{D}$ and $\Spec{D} $ is a perfect set, $z= \lambda_i$ is not isolated in $\Spec{D_{|H_{n+1,k}}} $, $\Spec{D_{|H_{n,k+1}}}$ or $D_{|H_{n+1,k+1}}$.
  
  In order to avoid that  $z= \lambda_i$ is an isolated point of $\Spec{D_{|H_{n,k}}} $, we have to put the vector $e_i$ in the good subspace near $H_{n,k}$.
  
  Denote by $\tilde{H}_{n,k} $ the previous cutting of $H$ which take account of this last precaution. We still have that $\tilde{H}_{n,k}$ reduce $D$,
  $$
  D = \bigoplus_{n,k \in \Z} D_{|\tilde{H}_{n,k}},
  \quad \textrm{and} \quad
  H = \bigoplus_{n,k \in \Z} \tilde{H}_{n,k}.
  $$
  
  So $\Spec{D_{|\tilde{H}_{n,k}}} $ is a perfect compact set and we can apply Proposition \ref{PropUPasDansImDMoinsZ}.  There exists $u_{n,k} $ such that for every $z \in \Spec{D_{|\tilde{H}_{n,k}}} \setminus \Specp{D_{|\tilde{H}_{n,k}}}$, 
  $
  u_{n,k} \notin \Range(D_{|\tilde{H}_{n,k}}-z)
  $.
  So for all $\alpha_{n,k} > 0$, we have for every $z \in \Spec{D_{|\tilde{H}_{n,k}}} \setminus \Specp{D_{|\tilde{H}_{n,k}}}$ that 
  $
   \alpha_{n,k} u_{n,k} \notin \Range(D_{|\tilde{H}_{n,k}}-z)
  $.
  We choose a sequence of positive numbers $\alpha_{n,k}$ such that
  $$
    \sum_{n \in \Z} \sum_{k \in \Z} \alpha_{n,k}^2 \norme{u_{n,k}} < \infty.
  $$
 Denote by
 $$
   u = \bigoplus_{n,k \in \Z} \alpha_{n,k}u_{n,k}.  
 $$
we get that $u \in H$. Moreover we have that for every $z \in \Spec{D} \setminus \Specp{D}$, there exists $n,k \in \Z$ such that $z \in \Spec{D_{|\tilde{H}_{n,k}}}\setminus \Specp{D_{|\tilde{H}_{n,k}}} $.
  Indeed, if $z \in \Spec{D}$, then there exists a sequence  $\lambda_i$ of eigenvalues of $D$ converging to $z$. If we consider a good subsequence, we can assume that every $\lambda_i$ are in only one square $C_{m,j}$. From the construction of $\tilde{H}_{m,j}$, $\lambda_i$ is associated to an eigenvector $e_i$ whether in $\tilde{H}_{m,j}$, $\tilde{H}_{m+1,j}$, $\tilde{H}_{m,j+1}$ or $\tilde{H}_{m+1,j+1}$. From the drawer principle, there exist at least one of those subspace which contains infinitely many $e_i$. 
   Denote this subspace by $\tilde{H}_{n,k}$. Then there exist infinitely many $\lambda_i$ in the spectrum of $D$ restricted to $\tilde{H}_{n,k}$. As $\lambda_i$ converge to $z$ and the spectrum is closed, we get that $z \in \Spec{D_{\vert \tilde{H}_{n,k}}}$. 
   As
  $$
  u_{n,k} \notin \Range(D_{|\tilde{H}_{n,k}} -z) 
  $$
  and thus 
  $$
    u \notin \Range(D-z).
  $$
  
  As Proposition \ref{PropFctionHoloQuiSannulePas} does not require that $\Spec{D}$ is bounded, we can finish the proof as in Theorem \ref{ThPertRangUnOpDiag}.

\end{proof}

\section*{Acknowledgments}
    I would like to thank Sophie Grivaux for several discussions and for her help to improve this paper. I would like to thank also William Alexandre for the proof of Proposition \ref{PropFctionHoloQuiSannulePas}, and for kindly allowing me to reproduce it here. I would like also to thanks the referee for his suggestions that helped to improve the presentation of the paper.

\bibliographystyle{alpha}
\bibliography{Biblio}

\begin{thebibliography}{FJKP07}

\bibitem[Ale]{William_Alexandre}
William Alexandre.
\newblock Personal communication.

\bibitem[BL15]{Baranov_Lishanskii_2014}
Anton Baranov and Andrei Lishanskii.
\newblock On {S}. {G}rivaux' example of a hypercyclic rank one perturbation of
  a unitary operator.
\newblock {\em Arch. Math. (Basel)}, 104(3):223--235, 2015.

\bibitem[FJKP07]{Foias_Jung_Ko_Pearcy_2007}
Ciprian Foias, Il~Bong Jung, Eungil Ko, and Carl Pearcy.
\newblock On rank-one perturbations of normal operators.
\newblock {\em J. Funct. Anal.}, 253(2):628--646, 2007.

\bibitem[FX12]{Fang_Xia_2012}
Quanlei Fang and Jingbo Xia.
\newblock Invariant subspaces for certain finite-rank perturbations of diagonal
  operators.
\newblock {\em J. Funct. Anal.}, 263(5):1356--1377, 2012.

\bibitem[Gri12]{Sophie_2012}
Sophie Grivaux.
\newblock A hypercyclic rank one perturbation of a unitary operator.
\newblock {\em Math. Nachr.}, 285(5-6):533--544, 2012.

\bibitem[Ion01]{Ionascu_2001}
Eugen~J. Ionascu.
\newblock Rank-one perturbations of diagonal operators.
\newblock {\em Integral Equations Operator Theory}, 39(4):421--440, 2001.

\bibitem[Kla15]{Klaja_2013}
Hubert Klaja.
\newblock Hyperinvariant subspaces for some compact perturbations of
  multiplication operators.
\newblock {\em J. Operator Theory}, 73(1):127--142, 2015.

\bibitem[Rog70]{Rogers_1970}
C.~A. Rogers.
\newblock {\em Hausdorff measures}.
\newblock Cambridge University Press, London, 1970.

\bibitem[SS05]{Stein_Shakarchi_2005}
Elias~M. Stein and Rami Shakarchi.
\newblock {\em Real analysis}.
\newblock Princeton Lectures in Analysis, III. Princeton University Press,
  Princeton, NJ, 2005.
\newblock Measure theory, integration, and Hilbert spaces.

\bibitem[Sta84]{Stampfli_2001}
Joseph~G. Stampfli.
\newblock One-dimensional perturbations of operators.
\newblock {\em Pacific J. Math.}, 115(2):481--491, 1984.

\end{thebibliography}

\end{document}